\numberwithin{equation}{section}
\newcommand{\C}{{\mathbb C}}
\newcommand{\Q}{\mathbb {Q}}
\newcommand{\Z}{{\mathbb Z}}
\newcommand{\B}{{\mathbf{B}}}
\newcommand{\A}{{\mathbf A}}
\newcommand{\R}{{\rm R}}
\newcommand{\CC}{{\mathscr{C}}}
\newcommand{\seteq}{\mathbin{:=}}
\theoremstyle{plain}
\newtheorem{lemma}{Lemma}[section]
\newtheorem{prop}[lemma]{Proposition}
\newtheorem{theorem}[lemma]{Theorem}
\newcommand{\Prop}{\begin{prop}}
\newcommand{\enprop}{\end{prop}}
\newcommand{\Lemma}{\begin{lemma}}
\newcommand{\enlemma}{\end{lemma}}
\newcommand{\Th}{\begin{theorem}}
\newcommand{\enth}{\end{theorem}}
\newtheorem{corollary}[lemma]{Corollary}
\newcommand{\Cor}{\begin{corollary}}
\newcommand{\encor}{\end{corollary}}
\newtheorem{definition}[lemma]{Definition}
\newcommand{\Def}{\begin{definition}}
\newcommand{\edf}{\end{definition}}
\theoremstyle{definition}
\newtheorem{remark}[lemma]{Remark}
\newtheorem{example}[lemma]{Example}
\newcommand{\g}{{\mathfrak{g}}}
\newcommand{\Hom}{\operatorname{Hom}}
\newcommand{\End}{\operatorname{End}}
\newcommand{\isoto}[1][]{\mathop{\xrightarrow[#1]%
{{\raisebox{-.6ex}[0ex][-.6ex]{$\mspace{2mu}\sim\mspace{2mu}$}}}}}
\newcommand{\eq}{\begin{eqnarray}}
\newcommand{\eneq}{\end{eqnarray}}
\newcommand{\eqn}{\begin{eqnarray*}}
\newcommand{\eneqn}{\end{eqnarray*}}
\newcommand{\on}{\operatorname}
\newcommand{\Ker}{\on{Ker}}
\newcommand{\bni}{\be[{\rm(i)}]}
\newcommand{\bna}{\be[{\rm(a)}]}
\newcommand{\QED}{\end{proof}}
\newcommand{\Proof}{\begin{proof}}
\newcommand{\soplus}{\mathop{\mbox{\normalsize$\bigoplus$}}\limits}
\newcommand{\To}[1][\phantom{aaaa}]{\xrightarrow{\,#1\,}}
\newcommand{\id}{\on{id}}
\newcommand{\ba}{\begin{array}}
\newcommand{\ea}{\end{array}}
\newcommand{\bi}{\begin{enumerate}[{\rm(i)}]}
\newcommand{\set}[2]{\left\{#1 \mathbin{;}\, #2 \right\}}
\newcommand{\Mod}{\operatorname{Mod}}
\newcommand{\hs}{\hspace*}
\newcommand{\eqsub}{\begin{subequations}\begin{eqnarray}}
\newcommand{\eneqsub}{\end{eqnarray}\end{subequations}}
\newcommand{\ol}{\overline}
\newcommand{\nc}{\newcommand}
\nc{\la}{\lambda}
\nc{\lam}{\lambda}
\nc{\U}[1][\g]{U_q(#1)}
\nc{\te}{\tilde{e}}
\nc{\tei}{\tilde{e}_i}
\nc{\tf}{\tilde{f}}
\nc{\tfi}{\tilde{f}_i}
\nc{\tU}{\widetilde U_q(\g)}
\nc{\tE}{\tilde{E}}
\nc{\tF}{\tilde{F}}
\nc{\tk}{\tilde{k}}
\nc{\tkone}{\tk_{\ol{1}}}
\nc{\teone}{\tilde{e}_{\ol{1}}}
\nc{\tfone}{\tilde{f}_{\ol{1}}}
\nc{\teibar}{\tilde{e}_{\ol{i}}} \nc{\tfibar}{\tilde{f}_{\ol{i}}}
\nc{\tki}{{\tk}_{\ol {i}}}
\nc{\BZ}{{\mathbb{Z}}}
\nc{\al}{\alpha}
\nc{\qs}{{q}}
\nc{\lan}{\langle}
\nc{\ran}{\rangle}
\nc{\re}{{\mathrm{re}}}
\nc{\wt}{\operatorname{wt}}
\nc{\ch}{\operatorname{ch}}
\nc{\Uf}[1][\g]{U^-_q(#1)}
\nc{\Ue}{U^+_q(\g)}
\nc{\eps}{\varepsilon}
\nc{\vphi}{\varphi}
\nc{\sphi}{\varphi^*}
\nc{\seps}{\varepsilon^*}
\nc{\nn}{\nonumber}
\def\max{{\mathop{\mathrm{max}}}}
\nc{\vp}{\varpi}
\nc{\cls}{{\operatorname{cl}}}
\nc{\Wt}{{\operatorname{Wt}}}
\nc{\Us}{U'_q(\g)}
\nc{\La}{\Lambda}
\nc{\ro}{{\rm(}}
\nc{\rf}{{\rm)}}
\nc{\norm}{{\mathrm{norm}}}
\nc{\qbox}{\quad\mbox}
\nc{\braid}{{\mathfrak{B}}}
\nc{\Ad}{\operatorname{Ad}}
\nc{\Aut}{\operatorname{Aut}}
\nc{\dt}[1]{\tilde{\tilde #1}}
\nc{\Sn}{S^{{\mathrm{norm}}}}
\nc{\aff}{{\mathrm{aff}}}
\nc{\rk}{{\mathrm{rk}}}
\nc{\tQ}{\widetilde{Q}}
\nc{\tP}{\widetilde{P}}
\nc{\tW}{\widetilde{W}}
\nc{\Dyn}{\mathrm{Dyn}}
\nc{\tD}{\widetilde{\Delta}}
\nc{\height}{{\operatorname{ht}}}
\nc{\bl}{\bigl(}
\nc{\br}{\bigr)}
\nc{\Hecke}{\mathrm{H}}
\nc{\HA}{\Hecke^{\mathrm{A}}}
\nc{\HB}{\Hecke^{\mathrm{B}}}
\newcommand{\scbul}{{\,\raise1pt\hbox{$\scriptscriptstyle\bullet$}\,}}
\nc{\vac}{{\phi}}
\nc{\Bt}{\B_\theta(\g)}
\nc{\be}{\begin{enumerate}}
\nc{\ee}{\end{enumerate}}
\nc{\low}{{\mathrm{low}}}
\nc{\upper}{{\mathrm{up}}}
\nc{\Zodd}{\Z_{\mathrm{odd}}}
\nc{\Ft}[1][n]{\mathbb{P}\mathrm{ol}_{#1}}
\nc{\Ftf}[1][n]{\widetilde{\mathbb{P}\mathrm{ol}}_{#1}}
\nc{\KA}{\on{K}^{\mathrm{A}}}
\nc{\KB}{\on{K}^{\mathrm{B}}}
\nc{\Res}{\on{Res}}
\nc{\Fc}[1][{n,m}]{\mathbf{F}_{#1}}
\nc{\tphi}{\tilde{\varphi}}
\nc{\CO}{\mathscr{O}}
\nc{\inte}{\mathrm{int}}
\nc{\Oint}{\mathcal{O}^{\ge0}_{\inte}}
\nc{\vs}{\vspace}
\nc{\tL}{\widetilde{L}}
\nc{\noi}{\noindent}
\nc{\heigh}{\mathfrak{t}}
\nc{\lowest}{\mathfrak{l}}
\nc{\rootl}{\mathsf{Q}}
\nc{\cl}{{\rm{cl}}}
\nc{\uqpg}{U'_q(\mathfrak g)}
\nc{\Ohat}{\widehat{\mathcal{O}}}
\nc{\KLR}{Khovanov-Lauda-Rouquier algebra}
\nc{\KLRs}{Khovanov-Lauda-Rouquier algebras}
\nc{\cor}{\mathbf{k}}
\nc{\cora}{{\cor(A)}}
\nc{\haut}{\mathrm{ht}}
\nc{\tens}{\mathop\otimes}
\nc{\gmod}{\mbox{-$\mathrm{gmod}$}}
\nc{\proj}{\mbox{-$\mathrm{proj}$}}
\nc{\modl}{\mbox{-$\mathrm{mod}$}}
\nc{\h}{\mathfrak h}
\nc{\Rnorm}{R^{\rm{\scriptsize{norm}}}}
\nc{\K}{\C(q)}
\nc{\Vhat}{\widehat{V}}
\nc{\F}{\mathcal{F}}
\nc{\fd}[1][A]{\on{\mathrm{flat.dim}_{#1}}}
\nc{\bP}[1][n]{\mathbb{P}_{#1}}
\nc{\bPh}[1][n]{\widehat{\mathbb{P}}_{#1}}
\nc{\bK}[1][n]{\widehat{\mathbb{K}}_{#1}}
\nc{\bV}[1][n]{\widehat{V}^{\otimes{#1}}}
\nc{\bVK}[1][n]{\widehat{V}^{#1}_K}
\nc{\opp}{\mathrm{opp}}
\newlength{\mylength}
\title[R-matrices and KLR algebras]%
{R-matrices for quantum affine algebras and Khovanov-Lauda-Rouquier algebras, I}
\author[S.-J. Kang, M. Kashiwara, M. Kim]{Seok-Jin Kang$^{1}$, 
Masaki Kashiwara$^{2}$, Myungho Kim $^{3}$
}
\address{Department of Mathematical Sciences
         and
         Research Institute of Mathematics \\
         Seoul National University \\ Seoul 151-747, Korea}
         \email{sjkang@math.snu.ac.kr}
\address{Research Institute for Mathematical Sciences \\
          Kyoto University \\ Kyoto 606-8502, Japan \\
          \& Department of Mathematical Sciences
         and
         Research Institute of Mathematics \\
         Seoul National University \\ Seoul 151-747, Korea}
         \email{masaki@kurims.kyoto-u.ac.jp}
\address{School of Mathematics, Korea Institute for Advanced Study \\ Seoul 130-722, Korea}
         \email{mhkim@kias.re.kr}
\thanks{$^{1}$This work was partially supported by NRF Grant \# 2012-005700.}
\thanks{$^{2}$This work was partially supported by Grant-in-Aid for
Scientific Research (B) 22340005, Japan Society for the Promotion of Science.}
\thanks{$^{3}$ This work was partially supported by NRF Grant \# 2011-0027952.}
\keywords{R-matrix, Quantum affine algebras, Khovanov-Lauda-Rouquier algebras, Quantum groups}
\subjclass[2010]{Primary 81R50, Secondary 20C08}
\begin{document}

\begin{abstract}
Let $I$ be a finite set of pairs consisting of good
$U'_q(\g)$-modules and invertible elements in the base field
$\C(q)$. The distribution of poles of normalized $R$-matrices yields
Khovanov-Lauda-Rouquier algebras $R^{I}(n)$ for $n \ge 0$.
We define a functor ${\mathcal F}$ from the category of 
finite-dimensional $R^I(n)$-modules to the category of finite-dimensional
$U_q'(\g)$-modules. We show that the functor ${\mathcal F}$ sends
 convolution products to tensor products and is exact if $R^I(n)$ is of type $A$,$D$,$E$.
\end{abstract}

\maketitle

\section*{Introduction}

Let $U_q(\g)=U_q(\widehat{\mathfrak{sl}_{N}})$ be the quantum affine
algebra of type $A_{N-1}^{(1)}$ and let $V_{\rm aff}$ be the
affinization of $V$, the vector representation of $U'_{q}(\g)$. We
denote by $H_{n}^{\rm aff}$ the affine Hecke algebra. In \cite{CP96,
Che, GRV94}, a functor
$$\mathcal{F}_n \colon H_{n}^{\rm aff} \modl
\rightarrow U'_{q}(\g) \modl$$
is introduced, which is given by
$$M \longmapsto (V_{\rm aff})^{\otimes n}\otimes_{H_{n}^{\rm aff}} M.$$
It was shown that the functor ${\mathcal F}_{n}$ is exact and that
${\mathcal F}_{n}$ is fully faithful when $N > n$.

In \cite{Kim12}, this idea was generalized to the case when
$U'_q(\g)$ is an arbitrary quantum affine algebra and $V$ is a good
module. That is, the poles of the $R$-matrix on $(V_{\rm aff})^{\otimes n}$ 
define a quiver, which in turn defines a 
Khovanov-Lauda-Rouquier algebra $R(n)$. Then one can construct a
functor $${\mathcal F}_n \colon R(n) \modl \rightarrow U'_q(\g) \modl $$
given by
$$M \longmapsto \bV\otimes_{R(n)} M,$$
where $\bV$ is a certain completion of $(V_{\rm aff})^{\otimes n}$. 
Moreover, it was shown that ${\mathcal F}_{n}$
sends convolution products to tensor products. When
$U'_q(\g)=U'_q(\widehat{\mathfrak{sl}_{N}})$ and $V$ is the vector
representation, it coincides with the one given in \cite{CP96}.

In this article, we extend this construction to the case when
we have a family of good modules. More precisely, let $\{V_s\}_{s\in
\mathcal{S}}$ be a family of good $\uqpg$-modules and let $I$ be a
finite subset of $\mathcal{S}\times \C(q)^{\times}$.
An element $i \in I$ is denoted by $i=({S(i)}, X(i))$.

Then we can define a quiver $\Gamma_I$ with $I$ 
as a set of vertices as follows.
For $i,j\in I$, let 
$\Rnorm_{V_{S(i)},V_{S(j)}}\colon (V_{S(i)})_u\tens (V_{S(j)})_v
\to  (V_{S(j)})_v\tens(V_{S(i)})_u$ be the normalized R-matrix.
We join $i$ and $j$ by edges if
$\Rnorm_{V_{S(i)},V_{S(j)}}$ has a pole at $u/v=X(i)/X(j)$.
Then the quiver $\Gamma_I$ 
defines a  Khovanov-Lauda-Rouquier algebra $R^I(n)$. For each sequence
$\nu \in I^n$, set
$$V_\nu = (V_{S(\nu_1)})_{\rm aff} \otimes \cdots \otimes (V_{S(\nu_n)})_{\rm aff},$$
and let $\bV$ be a certain completion of $\bigoplus_{\nu \in I^n} V_{\nu}$.
Then $\bV$ has a structure of a 
$(U'_q(\g), R^I(n))$-bimodule, and we can define the functor
$$\mathcal{F}_n \colon R^I(n) \modl \rightarrow U'_q(\g) \modl$$
given by
$$M \longmapsto \bV \otimes_{R^I(n)} M.$$

In this paper, we first  prove that ${\mathcal F}_{n}$ sends convolution products
to tensor products (Theorem \ref{thm:conv to tensor}). Moreover,
when the Cartan datum associated with $I$ is of type $A$, $D$, $E$,
we show that ${\mathcal F}_{n}$ is an exact functor (Theorem
\ref{thm:exact}).

In a forthcoming paper, we will study several applications of the
above functor ${\mathcal F}_{n}$. In particular, we will give a
construction of categories whose Grothendieck groups have quantum
cluster structures.
We will also provide an interpretation of the isomorphism between
$t$-deformed Grothendieck rings and negative parts of the quantum
groups, which was established recently in \cite{HL11}.

\section{Quantum groups and Khovanov-Lauda-Rouquier algebras}

\subsection{Quantum groups}

In this section, we recall the definitions of the quantum groups.
Let $I$ be a finite index set.
A \emph{Cartan datum} is a quintuple $(A,P, \Pi,P^{\vee},\Pi^{\vee})$ consists of
\begin{enumerate}[(a)]
\item an integer-valued matrix $A=(a_{ij})_{i,j \in I}$,
called the \emph{symmetrizable generalized Cartan matrix},
 which satisfies
\bni
\item $a_{ii} = 2$ $(i \in I)$,
\item $a_{ij} \le 0 $ $(i \neq j)$,
\item $a_{ij}=0$ if $a_{ji}=0$ $(i,j \in I)$,
\item there exists a diagonal matrix
$D=\text{diag} (\mathsf s_i \mid i \in I)$ such that $DA$ is
symmetric, and $\mathsf s_i$ are positive integers.
\end{enumerate}

\item a free abelian group $P$ of finite rank, called the \emph{weight lattice},
\item $\Pi= \{ \alpha_i \in P \mid \ i \in I \}$, called
the set of \emph{simple roots},
\item $P^{\vee}\seteq\Hom(P, \Z)$, called the \emph{dual weight lattice},
\item $\Pi^{\vee}= \{ h_i \ | \ i \in I \}\subset P^{\vee}$, called
the set of \emph{simple coroots},
\end{enumerate}

satisfying the following properties:
\begin{enumerate}
\item[(i)] $\langle h_i,\alpha_j \rangle = a_{ij}$ for all $i,j \in I$,
\item[(ii)] $\Pi$ is linearly independent,
\item[(iii)] for each $i \in I$, there exists $\Lambda_i \in P$ such that
           $\langle h_j, \Lambda_i \rangle =\delta_{ij}$ for all $j \in I$.
\end{enumerate}
We call $\Lambda_i$ the \emph{fundamental weights}.
The free abelian group $\rootl\seteq\soplus_{i \in I} \Z \alpha_i$ is called the
\emph{root lattice}. Set $\rootl^{+}= \sum_{i \in I} \Z_{\ge 0}
\alpha_i\subset\rootl$ and $\rootl^{-}= \sum_{i \in I} \Z_{\le0}
\alpha_i\subset\rootl$. For $\beta=\sum_{i\in I}m_i\al_i\in\rootl$,
we set
$\haut(\beta)=\sum_{i\in I}|m_i|$.

Set $\mathfrak{h}=\Q \otimes_\Z P^{\vee}$.
Then there exists a symmetric bilinear form $(\quad|\quad)$ on
$\mathfrak{h}^*$ satisfying
$$ (\alpha_i | \alpha_j) =\mathsf s_i a_{ij} \quad (i,j \in I)
\quad\text{and $\lan h_i,\lambda\ran=
\dfrac{2(\alpha_i|\lambda)}{(\alpha_i|\alpha_i)}$ for any $\lambda\in\mathfrak{h}^*$ and $i \in I$}.$$

Let $q$ be an indeterminate. For each $i \in I$, set $q_i = q^{\mathsf s_i}$.

\begin{definition} \label{def:qgroup}
The {\em quantum group} $U_q(\g)$ associated with a Cartan datum
$(A,P,\Pi,P^{\vee}, \Pi^{\vee})$ is the associative algebra over
$\mathbb Q(q)$ with $1$ generated by $e_i,f_i$ $(i \in I)$ and
$q^{h}$ $(h \in P^{\vee})$ satisfying following relations:
\begin{equation*}
\begin{aligned}
& q^0=1,\ q^{h} q^{h'}=q^{h+h'} \ \ \text{for} \ h,h' \in P^{\vee},\\
& q^{h}e_i q^{-h}= q^{\langle h, \alpha_i \rangle} e_i, \ \
          \ q^{h}f_i q^{-h} = q^{-\langle h, \alpha_i \rangle} f_i \ \ \text{for} \ h \in P^{\vee}, i \in
          I, \\
& e_if_j - f_je_i = \delta_{ij} \dfrac{K_i -K^{-1}_i}{q_i- q^{-1}_i
}, \ \ \mbox{ where } K_i=q^{\mathsf s_ih_i}, \\
& \sum^{1-a_{ij}}_{r=0} (-1)^r \left[\begin{matrix}1-a_{ij}
\\ r\\ \end{matrix} \right]_i e^{1-a_{ij}-r}_i
         e_j e^{r}_i =0 \quad \text{ if } i \ne j, \\
& \sum^{1-a_{ij}}_{r=0} (-1)^r \left[\begin{matrix}1-a_{ij}
\\ r\\ \end{matrix} \right]_i f^{1-a_{ij}-r}_if_j
        f^{r}_i=0 \quad \text{ if } i \ne j.
\end{aligned}
\end{equation*}
\end{definition}

Here, we set $[n]_i =\dfrac{ q^n_{i} - q^{-n}_{i} }{ q_{i} - q^{-1}_{i} },\quad
  [n]_i! = \prod^{n}_{k=1} [k]_i$ and
  $\left[\begin{matrix}m \\ n\\ \end{matrix} \right]_i= \dfrac{ [m]_i! }{[m-n]_i! [n]_i! }\;$
  for each $m,n \in Z_{\geq 0}$, $i \in I$.

We denote by $-$ the $\C$-algebra automorphism of $U_q(\g)$ given by
\begin{align*}
\overline q = q^{-1}, && \overline{q^h} = q^{-h} \ \ (h \in P^{\vee}), && \overline{e_i} = e_i \ (i \in I), && \overline{f_i} = f_i \ (i \in I).
\end{align*}

We denote by $\varphi$ and $*$ the $\mathbb Q(q)$-algebra anti-automorphisms of $U_q(\g)$ given by
\begin{align*}
(q^h)^* = q^{-h} \ \ (h \in P^{\vee}), && {e_i}^* = e_i \ (i \in I), && {f_i}^* = f_i \ (i \in I), \\
\varphi(q^h) = q^{h} \ \ (h \in P^{\vee}), && \varphi(e_i) = f_i \ (i \in I), && \varphi(f_i) = e_i \ (i \in I).
\end{align*}

We have two comultiplications $ \Delta_{\pm} \colon U_q(\g) \rightarrow U_q(\g) \otimes U_q(\g) $ given by
 \begin{align*}
 & \Delta_+(q^h) = q^h \otimes q^h, &
 & \Delta_+(e_i) = e_i \otimes 1 + K_i \otimes e_i, &
 & \Delta_+(f_i) = f_i \otimes K_i^{-1} + 1 \otimes f_i,
  \end{align*}
 \begin{align*}
 & \Delta_-(q^h) = q^h \otimes q^h, &
 & \Delta_-(e_i) = e_i \otimes K_i^{-1} + 1 \otimes e_i, &
 & \Delta_-(f_i) = f_i \otimes 1 + K_i \otimes f_i.
  \end{align*}

Let $U_q^{+}(\g)$ (resp.\ $U_q^{-}(\g)$) be the subalgebra of
$U_q(\g)$ generated by $e_i$'s (resp.\ $f_i$'s), and let $U^0_q(\g)$
be the subalgebra of $U_q(\g)$ generated by $q^{h}$ $(h \in
P^{\vee})$. Then we have the \emph{triangular decomposition}
$$ U_q(\g) \simeq U^{-}_q(\g) \otimes U^{0}_q(\g) \otimes U^{+}_q(\g),$$
and the {\em weight space decomposition}
$$U_q(\g) = \bigoplus_{\beta \in Q} U_q(\g)_{\beta},$$
where $U_q(\g)_{\beta}\seteq\set{ x \in U_q(\g)}{q^{h}x q^{-h}
=q^{\langle h, \beta \rangle}x \text{ for any } h \in P^{\vee}}$.

Let $\A= \Z[q, q^{-1}]$ and set
$$e_i^{(n)} = e_i^n / [n]_i!, \quad f_i^{(n)} =
f_i^n / [n]_i! \ \ (n \in \Z_{\ge 0}).$$
We define the $\A$-form
$U_{\A}(\g)$ to be the $\A$-subalgebra of $U_q(\g)$ generated by
$e_i^{(n)}$, $f_i^{(n)}$ $(i \in I, n \in \Z_{\ge 0})$, $q^h$ ($h\in P^\vee$).
Let $U_{\A}^{+}(\g)$ (resp.\ $U_{\A}^{-}(\g)$) be the
$\A$-subalgebra of $U_q(\g)$ generated by $e_i^{(n)}$ (resp.\
$f_i^{(n)}$) for $i\in I$, $n \in \Z_{\ge 0}$.

\subsection{Crystal bases and global bases}
\hfill

Let $e_i'$ and $e_i''$ be the operator on $U_q^-(\g)$ defined by
\begin{align*}
  [e_i, x] = \frac{e_i'' (x)K_i - K_i^{-1}e_i'(x)}{q_i-q_i^{-1}} & \quad (x \in U_q^-(\g)).
\end{align*}

The operator $e_i'$ satisfies
\begin{align*}
e_i'(f_j) =\delta_{ij} & \quad (i,j \in I) \ \text{and} && e_i'(xy) = e_i'(x) y + q_i^{\langle h_i, \wt(x)\rangle} x e_i'(y) & \quad (x,y \in U_q^-(\g)).
\end{align*}
Hence we have
\begin{align*}
e_i' f_j = q_i^{- \langle h_i, \alpha_j\rangle}f_j e_i'+ \delta_{ij},
\end{align*}
where $f_j \in \End_{\mathbb Q(q)}(U_q^-(\g))$ denotes the left multiplication by $f_j \in U_q^-(\g)$.
There exists a unique non-degenerate symmetric bilinear form $(\cdot,\cdot)_-$ on $U_q^-(\g)$ satisfying
\begin{align*}
(1,1)_-=1 \ \text{and} & \quad (e_i'x,y )_-=(x, f_i y)_- \ \text{for any} \ x,y \in U_q^-(\g).
\end{align*}
Any element $x \in U_q^-(\g)$ can be uniquely written as
\begin{align*}
  x = \sum_{n \geq 0} f_i^{(n)} x_n & \quad \text{for some} \ x_n \in \Ker(e_i').
\end{align*}
We define the Kashiwara operators $\tilde e_i$ and $\tilde f_i$ on $U_q^-(\g)$ by
\begin{align*}
  \tilde e_i x = \sum_{n \geq 1} f_i^{(n-1)} x_n, & \quad
  \tilde f_i x = \sum_{n \geq 0} f_i^{(n+1)} x_n.
\end{align*}

\begin{prop}[\cite{Kas91}]
Let $\A_0 = \{ f \in \mathbb Q(q) \,|\, f \ \text{is regular at} \ q=0\}$.
  Define
  \begin{align*}
    L(\infty) &= \sum_{\ell \geq 0, i_1,\ldots, i_\ell \in I} \A_0 \tilde f_{i_1} \cdots \tilde f_{i_\ell} \cdot 1 \subset U_q^-(\g), \\
    B(\infty) &= \{ \tilde f_{i_1} \cdots \tilde f_{i_\ell} \cdot 1 \ \text{mod} \ qL(\infty) \,|\,
    \ell \geq 0, i_1,\ldots, i_\ell \in I\ \} \subset L(\infty) / q L(\infty).
  \end{align*}
  Then we have
  \bna
    \item $\tilde e_i L(\infty) \subset L(\infty)$ and $\tilde f_i L(\infty) \subset L(\infty)$,
    \item $B(\infty)$ is a basis of $L(\infty) / q L(\infty)$,
    \item $\tilde f_i B(\infty) \subset B(\infty)$ and $\tilde e_i B(\infty) \subset B(\infty) \cup \{0\}$,
    \ee
    We call the pair $(L(\infty), B(\infty))$ the {\it crystal basis of $U_q^-(\g)$}.
\end{prop}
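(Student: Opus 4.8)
The plan is to reprove the statement along the lines of \cite{Kas91}, by a simultaneous induction --- Kashiwara's \emph{grand loop} --- on $\height(\beta)$, $\beta\in\rootl^+$, carried out inside the finite-dimensional weight spaces $U_q^-(\g)_{-\beta}$. Two structural facts drive everything. First, the $i$-string decomposition recorded above: every $x\in U_q^-(\g)$ is uniquely $x=\sum_{n\ge0}f_i^{(n)}x_n$ with $x_n\in\Ker(e_i')$, so that for each \emph{fixed} $i$ the space $U_q^-(\g)$ looks like a direct sum of ``lowest-weight'' $U_q(\mathfrak{sl}_2)$-strings along which $\tilde{f}_i$ and $\tilde{e}_i$ merely shift by one step. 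Second, the nondegenerate symmetric form $(\cdot,\cdot)_-$ with $(e_i'x,y)_-=(x,f_iy)_-$, which will be the bookkeeping device for linear independence.

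I would first dispose of the rank-one model. For $\g=\mathfrak{sl}_2$ one has $U_q^-(\mathfrak{sl}_2)=\bigoplus_{n}\Q(q)f^{(n)}$ and $\Ker(e')=\Q(q)\cdot1$, and directly $\tilde{f}f^{(n)}=f^{(n+1)}$, $\tilde{e}f^{(n)}=f^{(n-1)}$, $\tilde{e}\cdot1=0$; hence $L(\infty)=\bigoplus_n\A_0f^{(n)}$ and $B(\infty)=\{f^{(n)}\bmod qL(\infty)\}$, and (a)--(c) are transparent. For general $\g$ and fixed $i$ I would record the corresponding ``rank-one identities'': $e_i'f_j=q_i^{-a_{ij}}f_je_i'$ for $i\ne j$ (so left multiplication by $f_j$ preserves $\Ker(e_i')$), and along a single $i$-string $\tilde{e}_i\tilde{f}_i=\mathrm{id}$ while $\tilde{f}_i\tilde{e}_i=\mathrm{id}$ off the top. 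The subtlety is that these must be read \emph{on} $L(\infty)$, whose definition already mixes in all the $\tilde{f}_j$'s, so they cannot be checked in isolation and have to be woven into the induction.

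The core is the loop. Fix $N\ge1$ and assume, for all $\beta$ with $\height(\beta)<N$ and all $i,j\in I$: (A) $\tilde{e}_iL(\infty)_{-\beta}\subseteq L(\infty)$ and the $i$-string decomposition of $U_q^-(\g)$ restricts to $L(\infty)_{-\beta}$; (B) $(L(\infty)_{-\beta},L(\infty)_{-\beta})_-\subseteq\A_0$ and, after the rescaling forced by the $\mathfrak{sl}_2$ computation, the Gram matrix of the vectors $\tilde{f}_{i_1}\cdots\tilde{f}_{i_\ell}\cdot1$ of weight $-\beta$ is congruent to the identity matrix modulo $q\A_0$; (C) $B(\infty)_{-\beta}$ is a basis of $(L(\infty)/qL(\infty))_{-\beta}$ and decomposes into $i$-strings as in (c). One then proves the same for $\height(\beta)=N$. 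Here $\tilde{f}_iL(\infty)\subseteq L(\infty)$ is immediate from the definition, so the content is $\tilde{e}_iL(\infty)\subseteq L(\infty)$. For a generator $b=\tilde{f}_{j_1}\cdots\tilde{f}_{j_\ell}\cdot1$ of weight $-\beta$: when $j_1=i$, write $b=\tilde{f}_ib'$, decompose $b'$ (height $N-1$) into $i$-strings by the hypothesis, and read off the height-$N$ structure; when $j_1=j\ne i$, push $\tilde{f}_j$ through the $i$-string decomposition of the height-$(N-1)$ data using $e_i'f_j=q_i^{-a_{ij}}f_je_i'$, the commutation of the undivided $e_j',f_j$ with the $f_i^{(n)}$, and the Serre relations, so as to see that $\tilde{f}_jb\in L(\infty)$ has the predicted decomposition up to lower-order terms controlled by the hypothesis. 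In parallel the adjunction propagates (B): evaluating $(\tilde{e}_iu,\tilde{e}_iv)_-$ and $(\tilde{f}_iu,\tilde{f}_iv)_-$ modulo $q$ in terms of $(u,v)_-$ shows that distinct elements of $B(\infty)_{-\beta}$ stay separated (different strings, or different heights in the same string), which forces linear independence; since $B(\infty)_{-\beta}$ spans by construction, (C) and then (c) follow.

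The step I expect to fight with is exactly this $i\ne j$ cross-term: proving $\tilde{e}_iL(\infty)\subseteq L(\infty)$, i.e.\ that $\tilde{f}_j$ does not damage the $i$-string structure of the lattice. Since $\tilde{e}_i$ and $\tilde{f}_j$ do not commute there is no shortcut, and the three statements (A), (B), (C) are genuinely inseparable --- decoupling any one breaks the induction. Should the direct loop become too unwieldy, the fallback is the two-step route: first build crystal bases $(L(\lambda),B(\lambda))$ for the integrable highest-weight modules $V(\lambda)$ (again a grand loop, but helped by the relations $e_iv_\lambda=0$, $f_i^{\langle h_i,\lambda\rangle+1}v_\lambda=0$ and by the $i$-strings now being finite), then exploit the surjection $U_q^-(\g)\twoheadrightarrow V(\lambda)$, $u\mapsto uv_\lambda$, to identify $L(\lambda)$ with the image of $L(\infty)$ and to see that the crystals $B(\lambda)$ stabilize, as $\lambda$ grows, to a crystal that one recognizes as $B(\infty)$; properties (a)--(c) then descend from their $V(\lambda)$-counterparts.
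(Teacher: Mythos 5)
The paper itself gives no proof of this proposition: it is quoted from \cite{Kas91}, so the only thing to compare your sketch with is Kashiwara's original grand-loop argument, on which your plan is explicitly modelled. Judged as a proof, your proposal has a genuine gap: the one step carrying all the content --- showing at height $N$ that $\tilde e_i L(\infty)\subset L(\infty)$ and that the weight-$(-\beta)$ part of $B(\infty)$ stays linearly independent (orthonormal modulo $q$) in the presence of the cross-terms $\tilde e_i\tilde f_j$ with $j\ne i$ --- is only announced (``the step I expect to fight with''), not carried out, and the rank-one identities together with the adjunction $(e_i'x,y)_-=(x,f_iy)_-$ that you list are not by themselves sufficient to close it. In Kashiwara's actual argument this step is not closed inside $U_q^-(\g)$ at all: the grand loop runs simultaneously over $U_q^-(\g)$ and over all integrable highest-weight modules $V(\lambda)$, linked by $u\mapsto uv_\lambda$, and the decisive control of the $i\ne j$ interaction comes from the tensor-product rule for the crystal lattices of $V(\lambda)\otimes V(\mu)$, interwoven with statements such as $\tilde e_iL(\lambda)\subset L(\lambda)$ and $(L(\lambda),L(\lambda))\subset\A_0$. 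So what you present as a ``fallback'' (passing through the $V(\lambda)$ and letting $\lambda$ grow) is in fact the essential mechanism of the proof, and treating it as optional is precisely where the missing idea lies; the purely internal induction on $U_q^-(\g)$ with the form $(\cdot,\cdot)_-$ is not known to close on its own.

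A smaller inaccuracy: in your hypothesis (B) you state that the Gram matrix of the monomials $\tilde f_{i_1}\cdots\tilde f_{i_\ell}\cdot 1$ of weight $-\beta$ is congruent to the identity modulo $q\A_0$. As literally stated this cannot hold, since distinct monomials are very often linearly dependent --- indeed equal, or congruent modulo $qL(\infty)$. The correct inductive statement is that $(L(\infty),L(\infty))_-\subset\A_0$ and that, with respect to the induced form at $q=0$, the set $B(\infty)$ (nonzero residues, counted without repetition) is orthonormal; since the linear independence of $B(\infty)$ is exactly assertion (b) of the proposition, the inductive statement must be formulated so as not to presuppose it.
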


Set $L(\infty)^- =\{\overline x \, | \, x \in L(\infty)\}$.
Then the triple $(L(\infty), L(\infty)^-, U_\A^-(\g))$ is {\it balanced}; i.e.,
\begin{align*}
  L(\infty) \cap L(\infty)^- \cap U_\A^-(\g) \rightarrow L(\infty) / q L(\infty)
\end{align*}
is a $\mathbb Q$-vector space isomorphism.

Let $G^{\text{low}}$ be the inverse of the above isomorphism.
Then $$\mathbf B^{\text{low}}\seteq \set{G^{\text{low}}(b)}{b \in B(\infty)}$$ forms a basis of $U_q^-(\g)$.
We call it the {\it lower global basis}.
Let
$$\mathbf B^{\text{up}}\seteq \set{G^{\text{up}}(b)}{b \in B(\infty)}$$
 be the dual basis of $\mathbf B^{\text{low}}$ with
respect to the bilinear form $(\cdot,\cdot)_-$.
It is called the {\it upper global basis of $U_q^-(\g)$}.

\bigskip
\subsection{\KLRs\ }
 \hfill

Now we recall the definition of the \KLRs\ associated with a given Cartan datum $(A, P, \Pi, P^{\vee}, \Pi^{\vee})$.

Let $\cor$ be a base field.
For $i,j\in I$ such that $i\not=j$, set
$$S_{i,j}=\set{(p,q)\in\Z_{\ge0}^2}{(\al_i | \al_i)p+(\al_j | \al_j)q=-2(\al_i | \al_j)}.
$$
Let us define the polynomials $(Q_{ij})_{i,j\in I}$ in $\cor[u,v]$
by
\begin{equation} \label{eq:Q}
Q_{ij}(u,v) = \begin{cases}\hs{5ex} 0 \ \ & \text{if $i=j$,} \\
\sum\limits_{(p,q)\in S_{i,j}}
t_{i,j;p,q} u^p v^q\quad& \text{if $i \neq j$.}
\end{cases}
\end{equation}
They satisfy $t_{i,j;p,q}=t_{j,i;q,p}$ (equivalently, $Q_{i,j}(u,v)=Q_{j,i}(v,u)$) and
$t_{i,j:-a_{ij},0} \in \cor^{\times}$.

We denote by
$S_{n} = \langle s_1, \ldots, s_{n-1} \rangle$ the symmetric group
on $n$ letters, where $s_i\seteq (i, i+1)$ is the transposition of $i$ and $i+1$.
Then $S_n$ acts on $I^n$ by place permutations.

\begin{definition}
The {\em Khovanov-Lauda-Rouquier algebras $R(n)$ of degree $n$
associated with the Cartan datum $(A,P, \Pi,P^{\vee},\Pi^{\vee})$ and the matrix $(Q_{ij})_{i,j \in I}$} is the associative algebra
over $\cor$ generated by the elements $\{ e(\nu) \}_{\nu \in I^{n}}$, $ \{x_k \}_{1 \le k
\le n}$, $\{ \tau_m \}_{1 \le m \le n-1}$ satisfying the following
defining relations:
\begin{equation*} \label{eq:KLR}
\begin{aligned}
& e(\nu) e(\nu') = \delta_{\nu, \nu'} e(\nu), \ \
\sum_{\nu \in I^{n}} e(\nu) = 1, \\
& x_{k} x_{m} = x_{m} x_{k}, \ \ x_{k} e(\nu) = e(\nu) x_{k}, \\
& \tau_{m} e(\nu) = e(s_{m}(\nu)) \tau_{m}, \ \ \tau_{k} \tau_{m} =
\tau_{m} \tau_{k} \ \ \text{if} \ |k-m|>1, \\
& \tau_{k}^2 e(\nu) = Q_{\nu_{k}, \nu_{k+1}} (x_{k}, x_{k+1})
e(\nu), \\
& (\tau_{k} x_{m} - x_{s_k(m)} \tau_{k}) e(\nu) = \begin{cases}
-e(\nu) \ \ & \text{if} \ m=k, \nu_{k} = \nu_{k+1}, \\
e(\nu) \ \ & \text{if} \ m=k+1, \nu_{k}=\nu_{k+1}, \\
0 \ \ & \text{otherwise},
\end{cases} \\
& (\tau_{k+1} \tau_{k} \tau_{k+1}-\tau_{k} \tau_{k+1} \tau_{k}) e(\nu)\\
& =\begin{cases} \dfrac{Q_{\nu_{k}, \nu_{k+1}}(x_{k},
x_{k+1}) - Q_{\nu_{k}, \nu_{k+1}}(x_{k+2}, x_{k+1})} {x_{k} -
x_{k+2}}e(\nu) \ \ & \text{if} \
\nu_{k} = \nu_{k+2}, \\
0 \ \ & \text{otherwise}.
\end{cases}
\end{aligned}
\end{equation*}
\end{definition}

The above relations are homogeneous provided with
\begin{equation*} \label{eq:Z-grading}
\deg e(\nu) =0, \quad \deg\; x_{k} e(\nu) = (\alpha_{\nu_k}
| \alpha_{\nu_k}), \quad\deg\; \tau_{l} e(\nu) = -
(\alpha_{\nu_l} | \alpha_{\nu_{l+1}}),
\end{equation*}
and hence $R(n)$ is a ($\Z$-)graded algebra.
 For a graded $R(\beta)$-module $M=\bigoplus_{k \in \Z} M_k$, we define 
$qM =\bigoplus_{k \in \Z} (qM)_k$, where
 \begin{align*}
 (qM)_k = M_{k-1} & \ (k \in \Z).
 \end{align*}
We call $q$ the \emph{grade-shift functor} on the category of graded $R(n)$-modules.

For $n \in \Z_{\geq 0}$ and $\beta \in Q_+$ such that $|\beta| = n$, we set
$$I^{\beta} = \set{\nu = (\nu_1, \ldots, \nu_n) \in I^{n}}%
{ \alpha_{\nu_1} + \cdots + \alpha_{\nu_n} = \beta },$$ and
\begin{equation*}
R(\beta) = \bigoplus_{\nu \in I^{\beta}} R(n) e(\nu).
\end{equation*}
Note that for each $\beta \in \rootl^+ $, 
the element $e(\beta)\seteq\sum_{\nu \in I^{\beta}} e(\nu)$ is 
a central idempotent of $R(n)$.
The algebra $R(\beta)$ is called the {\it Khovanov-Lauda-Rouquier algebra at $\beta$}.

For a graded $R(m)$-module $M$ and a graded $R(n)$-module $N$,
we define the \emph{convolution product}
$M\circ N$ by
$$M\circ N=R(m+n)
\tens_{R(m)\otimes R(n)}(M\otimes N). $$

Let us denote by $R(\beta)\proj$ (respectively, $R(\beta) \gmod$)
the category of finitely generated graded projective 
(respectively, finite-dimensional over $\cor$ graded) $R(\beta)$-modules.
When $K(\R(\beta)\proj)$ and $K(\R(\beta)\gmod)$ denote the corresponding Grothendieck groups, the spaces
$$\bigoplus_{\beta \in \rootl^+} K(R(\beta)\proj) , \quad \bigoplus_{\beta \in \rootl^+} K(R(\beta)\gmod)$$
are $\A$-algebras with multiplications given by convolution products and $\A$-actions given by the grade-shift functor $q$.

A \KLR\ \emph{categorifies} the negative half of the corresponding quantum group. More precisely, we have the following theorem.

\begin{theorem}[\cite{KL09, R08}] \label{Thm:categorification}
Let $U_q(\mathfrak g)$ be the quantum group associated with a given Cartan datum $(A,P, \Pi,P^{\vee},\Pi^{\vee})$ and $R=\bigoplus_{n \geq 0}R(n)$ be the \KLR\ associated with the same Cartan datum and a matrix $(Q_{ij})_{i,j \in I}$ given in \eqref{eq:Q}.
Then there exists an $\A$-algebra isomorphism
\begin{align*}
  U^-_\A(\mathfrak g) \isoto \bigoplus_{\beta \in \rootl^+} K(R(\beta) \proj).
\end{align*}
By duality, we have
\begin{align*}
  U^-_\A(\mathfrak g)^{\vee} \isoto \bigoplus_{\beta \in \rootl^+} K(R(\beta) \gmod),
\end{align*}
where $U^-_\A(\mathfrak g)^{\vee} \seteq \set{x \in U_q^-(\g)}%
{(x, U_\A^-(\g)) \subset \A}$.
\end{theorem}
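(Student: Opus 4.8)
The plan is to follow the original arguments of Khovanov--Lauda \cite{KL09} and Rouquier \cite{R08}. First one equips $K^{\proj}\seteq\bigoplus_{\beta\in\rootl^+}K(R(\beta)\proj)$ with the structure of a twisted $\A$-bialgebra. For $\beta=\beta'+\beta''$ the idempotent $e(\beta',\beta'')\seteq\sum e(\nu)$, summed over those $\nu$ whose first $\haut(\beta')$ entries have total weight $\beta'$, gives an algebra embedding $R(\beta')\otimes R(\beta'')\hookrightarrow e(\beta',\beta'')R(\beta)e(\beta',\beta'')$, hence an induction functor $\on{Ind}^{\beta}_{\beta',\beta''}=R(\beta)e(\beta',\beta'')\otimes_{R(\beta')\otimes R(\beta'')}(-)$ --- which is exact, preserves projectives, and on Grothendieck groups is the convolution product $\circ$ introduced above --- and a restriction functor $\on{Res}^{\beta}_{\beta',\beta''}=e(\beta',\beta'')R(\beta)\otimes_{R(\beta)}(-)$. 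A Mackey-type filtration of $\on{Res}\circ\on{Ind}$ shows that the induced multiplication on $K^{\proj}$ and comultiplication on $K^{\gmod}\seteq\bigoplus_\beta K(R(\beta)\gmod)$ are compatible up to the standard twist, and the graded-dimension pairing $\langle[P],[M]\rangle=\dim_q\Hom_{R(\beta)}(P,M)$ is a perfect $\A$-bilinear pairing $K^{\proj}\times K^{\gmod}\to\A$ with respect to which $\on{Ind}$ and $\on{Res}$ are adjoint.

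Next I would define an $\A$-algebra map $\gamma\colon U^-_\A(\g)\to K^{\proj}$ on divided powers. For a single vertex $R(n\alpha_i)$ is, up to a grade shift, the nilHecke algebra, Morita equivalent to $\cor[x_1,\dots,x_n]^{S_n}$, and has a unique indecomposable graded projective $P_i^{(n)}$; one sets $\gamma(f_i^{(n)})=[P_i^{(n)}]$, so in particular $\gamma(f_i)=[R(\alpha_i)]=[\cor[x_1]]$. That $\gamma$ respects the quantum Serre relations reduces to an explicit computation inside $R\bigl((1-a_{ij})\alpha_i+\alpha_j\bigr)$ for $i\ne j$: one builds a short exact sequence, resp.\ a two-step filtration, of graded projectives whose alternating class is exactly the Serre relator, and this is precisely where the defining relations of $R$ and the shape of the polynomials $Q_{ij}$ from \eqref{eq:Q} enter. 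Granting the basis theorem for $R(\beta)$ --- freeness over $\cor$ on $\{\tau_w\,x_1^{a_1}\cdots x_n^{a_n}e(\nu)\}$ --- every finitely generated graded projective $R(\beta)$-module is a direct summand of a finite sum of grade shifts of modules $R(\alpha_{i_1})\circ\cdots\circ R(\alpha_{i_n})$ with $\sum_k\alpha_{i_k}=\beta$; since all these classes lie in the image of $\gamma$, the map $\gamma$ is surjective, and dually $\gamma^\vee\colon K^{\gmod}\to U^-_\A(\g)^\vee$ is injective.

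The crux is injectivity of $\gamma$, for which one shows that $\gamma$ is an isometry between $(\cdot,\cdot)_-$ on $U^-_q(\g)$ and the pairing $\langle\cdot,\cdot\rangle$ above: both satisfy the same one-vertex adjunction --- left multiplication by $f_i$ is adjoint to the restriction-by-$\alpha_i$ functor, which categorifies the operator $e_i'$ --- and both send $(1,1)$ to $1$, so they correspond under $\gamma$; nondegeneracy of $(\cdot,\cdot)_-$ then forces $\gamma$ to be injective. With surjectivity this yields the first isomorphism, and transporting it through the perfect pairings on the two sides gives $U^-_\A(\g)^\vee\isoto K^{\gmod}$; $\A$-linearity and equivariance for the grade-shift functor $q$ are built in throughout, so both maps are $\A$-algebra isomorphisms. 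I expect the two genuinely substantial ingredients to be the basis theorem for $R(\beta)$ (on which all the non-vanishing statements rest) and the Serre-relation computation in $R\bigl((1-a_{ij})\alpha_i+\alpha_j\bigr)$, with the isometry step being the delicate part of bijectivity; the bialgebra bookkeeping is otherwise routine. One can alternatively invoke Rouquier's setup, where $R(\beta)$ is the endomorphism algebra of the universal integrable $2$-representation and faithfulness comes from a graded-rank count, but the isometry argument is the more self-contained route.
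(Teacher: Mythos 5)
The paper does not prove this theorem; it is quoted directly from \cite{KL09, R08}, and your outline is a correct and faithful reconstruction of the Khovanov--Lauda argument those references give (induction/restriction bialgebra structure with the Mackey filtration, nilHecke computation and Serre relations defining $\gamma$, surjectivity from the basis theorem, injectivity via the isometry with $(\cdot,\cdot)_-$, and dualization for the $\gmod$ statement). So your proposal matches the intended source proof in both substance and emphasis, and no gap needs to be flagged.
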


The \KLRs\ also categorify the global bases in the following sense:

\begin{theorem}[\cite{VV09, R11}] \label{thm:categorification 2}
Assume that $A$ is symmetric. 
Then under the isomorphism in Theorem \ref{Thm:categorification}, 
the lower global basis \ro respectively, upper global basis\rf\ 
 corresponds to the set of isomorphism classes of 
indecomposable projective modules \ro respectively, the set of isomorphism classes of simple modules\rf.
\end{theorem}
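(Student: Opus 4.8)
The argument is geometric, via Lusztig's realization of $U_q^-(\g)$ for symmetric Cartan data by perverse sheaves on representation varieties of a quiver. Choose a quiver $Q$ with vertex set $I$ whose associated symmetric Cartan matrix is $A$. For $\beta=\sum_i m_i\al_i\in\rootl^+$ fix an $I$-graded $\cor$-vector space $\V$ with $\dim \V_i=m_i$, let $E_\beta=\bigoplus_{h\colon i\to j}\Hom(\V_i,\V_j)$ be the corresponding representation space of $Q$, and let $G_\beta=\prod_i GL(\V_i)$ act on $E_\beta$. For each $\nu\in I^\beta$ Lusztig constructs a smooth variety $\widetilde F_\nu$ of flags of type $\nu$ with a proper map $\pi_\nu\colon\widetilde F_\nu\to E_\beta$, and one sets $L_\nu=(\pi_\nu)_!\,\underline{\cor}_{\widetilde F_\nu}[\dim\widetilde F_\nu]$ and $L_\beta=\bigoplus_{\nu\in I^\beta}L_\nu\in \mathrm D^\rb_{G_\beta}(E_\beta)$. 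By the decomposition theorem (here $\cor$ has characteristic $0$) $L_\beta$ is a direct sum of shifts of $G_\beta$-equivariant simple perverse sheaves; let $\mathcal P_\beta$ be the set of these simple summands up to shift and $\mathcal Q_\beta$ the full subcategory of $\mathrm D^\rb_{G_\beta}(E_\beta)$ of finite direct sums of shifts of summands of $L_\beta$. Lusztig's theorem then gives an $\A$-algebra isomorphism $\bigoplus_\beta K(\mathcal Q_\beta)\isoto U_\A^-(\g)$ carrying $\bigsqcup_\beta\mathcal P_\beta$ to the canonical basis.

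The first main step is the geometric realization of $R(\beta)$: there is a graded $\cor$-algebra isomorphism
\[
R(\beta)\;\cong\;\Ext^\bullet_{G_\beta}(L_\beta,\,L_\beta)
\]
sending $e(\nu)$ to the projection onto $L_\nu$, the $x_k$'s to equivariant Chern classes of the tautological line bundles on the flag varieties, and the $\tau_m$'s to the classes of the natural correspondences; the defining relations of $R(\beta)$ — in particular the form of the polynomials $Q_{ij}$ — are checked by explicit computations in the equivariant Borel--Moore homology of the flag varieties $\widetilde F_\nu$ and of the fibre products $\widetilde F_\nu\times_{E_\beta}\widetilde F_{\nu'}$. (A given quiver orientation produces one particular matrix $(Q_{ij})$ of the shape \eqref{eq:Q}, and $R(\beta)$ is independent of this choice up to isomorphism, so the general statement follows.) Granting this, the functor $\Hom^\bullet_{G_\beta}(L_\beta,-)$ is an equivalence from $\mathcal Q_\beta$ onto $R(\beta)\proj$ intertwining the shift $[1]$ with the grade shift $q$: indeed, for a semisimple complex $L$, $\Hom^\bullet(L,-)$ sends the indecomposable summands of shifts of $L$ to the indecomposable graded projective $\Ext^\bullet(L,L)$-modules. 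Hence the indecomposable projective $R(\beta)$-modules correspond bijectively, up to grade shift, to $\mathcal P_\beta$, i.e.\ to the canonical basis. To see that this matches the isomorphism of Theorem~\ref{Thm:categorification}: both are $\A$-algebra isomorphisms, and the convolution product on $\bigoplus_\beta K(R(\beta)\proj)$ corresponds under $\Hom^\bullet_{G_\beta}(L_\bullet,-)$ to Lusztig's geometric induction product, so it suffices to compare the two isomorphisms on the algebra generators $[R(n\al_i)]$, where they visibly agree. Finally, since $A$ is symmetric the canonical basis equals the lower global basis $\mathbf{B}^{\text{low}}$ (Grojnowski--Lusztig; also Kashiwara), so $\mathbf{B}^{\text{low}}$ corresponds to the classes of the indecomposable projective modules.

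The statement for simple modules follows by duality. The graded pairing $\langle[P],[M]\rangle=\dim_q\Hom_{R(\beta)}(P,M)$ identifies $\bigoplus_\beta K(R(\beta)\gmod)$ with the graded $\A$-dual of $\bigoplus_\beta K(R(\beta)\proj)$, and this is the pairing underlying the second isomorphism in Theorem~\ref{Thm:categorification}. Writing $P_b\epi S_b$ for the projective cover of the simple module attached to $b\in B(\infty)$ and using that $\End_{R(\beta)}(S_b)=\cor$ (a property of \KLRs\ over a field), one gets $\langle[P_b],[S_{b'}]\rangle=\delta_{b,b'}$. Thus the classes of the simple modules form the basis dual to $\{[P_b]\}_b$; since $\mathbf{B}^{\text{up}}$ is by definition dual to $\mathbf{B}^{\text{low}}$ with respect to $(\cdot,\cdot)_-$, the classes of the simple $R(\beta)$-modules correspond to the upper global basis.

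The principal obstacle is the geometric realization $R(\beta)\cong\Ext^\bullet_{G_\beta}(L_\beta,L_\beta)$, which rests on the semisimplicity/purity of $L_\beta$ furnished by the decomposition theorem — exactly the point where symmetry of $A$, hence the availability of a quiver and of Lusztig's construction, is essential — and on the explicit computation of this $\Ext$-algebra by generators and relations. Once these are secured, the remaining ingredients (the homological algebra of $\Ext$-algebras of semisimple complexes, the comparison of the two isomorphisms on generators, and the duality) are formal. Alternatively, one can avoid geometry in finite or affine type by comparing projectives and simples with the ``standard modules'' attached to PBW bases and checking that the relevant transition matrices are unitriangular.
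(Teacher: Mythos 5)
Your sketch is correct, and it is essentially the same argument as the paper's source for this statement: the paper gives no proof of its own but cites \cite{VV09, R11}, and your outline (realizing $R(\beta)$ as the Yoneda algebra $\Ext^\bullet_{G_\beta}(L_\beta,L_\beta)$ of Lusztig's semisimple complexes via the decomposition theorem, matching indecomposable projectives with the canonical $=$ lower global basis, and passing to simples by the dual pairing with $K(R(\beta)\proj)$) is precisely the Varagnolo--Vasserot argument those references contain. No substantive gap to report.
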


\section{Quantum affine algebras and their representations}

\subsection{Quantum affine algebras}
 \hfill

In this section, we briefly review the representation theory of quantum affine algebras following \cite{AK, Kas02}
and introduce a functor between the category of \KLR\ modules and the category of quantum affine algebra modules.
Hereafter, we take $\C(q)$ as the base field $\cor$.

Let $I= \{0,1,\ldots,n\}$ be an index set and $A=(a_{ij})_{i,j \in I}$ be a generalized
Cartan matrix of affine type; i.e., $A$ is positive semidefinite of corank $1$.
 Here $0$ is chosen as the leftmost vertices in the tables
in \cite[{pages 48, 49}]{Kac}. 
We take a Cartan datum $(A,P, \Pi,P^{\vee},\Pi^{\vee})$ as follows.

The coweight lattice $P^{\vee}$ is given by
\begin{align*}
  P^{\vee} = \Z h_0 \oplus \Z h_1 \oplus \cdots \oplus \Z h_n \oplus \Z d.
\end{align*}
The element $d$ is called the {\it scaling element}.
We define the simple roots $\alpha_i$'s ($i\in I$) and the fundamental weights $\Lambda_i$'s ($i\in I$) in the weight lattice $P\seteq\Hom_\Z (P^{\vee},\Z)$ as follows:
\begin{align*}
  \alpha_i(h_j)= a_{ji}, \ \alpha_i(d) = \delta_{0,i}, \ \text{and} \ \Lambda_i(h_j) = \delta_{i,j}, \ \Lambda_i(d) = 0.
\end{align*}
We denote by $\Pi=\set{\alpha_i }{i \in I}$ and 
$\Pi^{\vee}= \set{h_i }{i \in I}$ the set of simple roots and the set of simple coroots, respectively.

Let us denote by $\g$ 
the affine Kac-Moody algebra corresponding to the Cartan datum $(A,P, \Pi,P^{\vee},\Pi^{\vee})$.
Consider the positive integers $c_i$'s and $d_i$'s determined by the conditions
$$\sum_{i=0}^n c_i a_{ij}= \sum_{i=0}^n a_{ji} d_i =0
\quad\text{for all $j \in I$,}$$
and $\{c_0, c_1, \ldots, c_n \}$, $\{d_0, d_1, \ldots, d_n \}$ are relatively prime positive integers
(see \cite[Chapter 4]{Kac}). Then the center of $\g$ is 1-dimensional and is generated by the {\it canonical central element}
$$c= c_0 h_0 + c_1 h_1 + \cdots + c_n h_n$$
(\cite[Proposition 1.6]{Kac}).
Also it is known that the imaginary roots of $\g$ are nonzero integral multiples of the {\it null root}
$$\delta= d_0 \alpha_0 + d_1 \alpha_1 + \cdots + d_n \alpha_n$$
(\cite[Theorem 5.6]{Kac}).
Note that $d_0=1$ if $\g \neq A^{(2)}_{2n}$ and $d_0=2$ if $\g=A^{(2)}_{2n}$ .
Note also that $c_0 =1$ in all cases.

Now the weight lattice can be written as
$$P = \mathbb{Z} \Lambda_0 \oplus \mathbb{Z} \Lambda_1 \oplus \cdots
\oplus \mathbb{Z}\Lambda_n \oplus \mathbb{Z} d_0^{-1} \delta $$
(see \cite[Chapter 4]{Kac}).
We have
\begin{align*}
   \alpha_0 = \sum_{i=0}^n a_{ij} \Lambda_i + d_0^{-1} \delta, \quad \alpha_j = \sum_{i=0}^n a_{ij} \Lambda_i \ \text{for $j=1,\ldots, n$.}
\end{align*}

Let us denote by $U_q(\g)$ the quantum group associated with the affine Cartan datum $(A,P, \Pi,P^{\vee},\Pi^{\vee})$.
 We denote by $U_q'(\mathfrak{g})$ the subalgebra of $U_q(\mathfrak{g})$ generated by $e_i,f_i,K_i^{\pm1}(i=0,1,,\ldots,n)$ and it is called the {\it quantum affine algebra}.
Hereafter we extend the base field $\mathbb Q(q)$ of $\uqpg$ to 
$\cor\seteq\C(q)$ for convenience.

Set $$P^{\vee}_{\cl}=\mathbb{Z} h_0 \oplus \cdots \oplus \mathbb{Z}h_n \subset P^{\vee}, \ \text{and} \ \mathfrak{h}_\cl=\Q \otimes_\Z P^{\vee}_\cl \subset \mathfrak{h}.$$
Let $\cl \colon {\mathfrak h}^* \to (\mathfrak{h}_\cl)^* $ be the projection thus obtained. Then $\cl^{-1}(0)=\Q\delta$. 
We denote the {\it classical weight lattice} $\cl(P)$ by $P_\cl$. Set $\Pi_\cl = \cl(\Pi)$, and set $\Pi^{\vee}_\cl= \{h_0, \ldots, h_n\}$.
Then $U_q'(\mathfrak{g})$ can be regarded as the quantum group associated with the quintuple $(A,P_\cl, \Pi_\cl,P^{\vee}_\cl,\Pi^{\vee}_\cl)$.

Set $\h^{*0} = \{\lambda \in \h^* \,| \, \lambda(c) = 0 \} $, $\h^{*0}_\cl = \cl (\h^{*0})$
and $P^0_\cl = \cl(P) \cap \cl (\h^{*0})$.
We call the elements of $P^0_\cl$ by the {\it classical integral weight of level $0$.}
Let $W$ be the {\it Weyl group} of $\g$.
It is the subgroup of $\Aut(\h^*)$ generated by the simple reflections
$\sigma_i(\lambda)= \lambda -\lambda(h_i) \alpha_i$ for $i=0,1,\ldots,n$.
 Since $\delta(h_i)=\alpha_i(c)=0$ for $i=0,1,\ldots, n$,
there exists a group homomorphism $W \rightarrow \Aut(\h^{*0}_\cl)$.
We denote the image by $W_\cl$. Then $W_\cl$ is a finite group and it is isomorphic to the subgroup of $W$ generated by $\sigma_1, \ldots, \sigma_n$.

A $\uqpg$-module $M$ is called an {\it integrable module} if $M$ has a weight space decomposition
$$M = \bigoplus_{\lambda \in P_\cl} M_\lambda,$$
where $M_{\lambda}= \{ u \in M \ ; \ q^h u =q^{\langle h, \lambda \rangle} u \ \text{for all} \ h \in P_{\cl}^{\vee}\}$,
and if the actions of
$e_i$ and $f_i$ on $M$ are locally nilpotent for any $i\in I$.
In this paper, we mainly consider the category of finite-dimensional integrable $\uqpg$-modules.
Let us denote this category by $\CC$. The objects in this category are called 
of type $1$ (for example, see \cite{CP94}).

\begin{definition}
Let $u$ be a weight vector of weight $\lambda\in P_\cl$ of an integrable $\uqpg$-module $M$.
We call $u$ {\em extremal},
if we can find
vectors
$\{u_w\}_{w\in W}$
satisfying the following properties:
\eq
&&\text{$u_w=u$ for $w=e$,} \nonumber \\
&&
\hbox{if $\langle h_i,w\lambda\rangle\ge 0$, then
$e_iu_w=0$ and $f_i^{(\langle h_i,w\lambda\rangle)}u_w=u_{s_iw}$,} \nonumber \\
&&\hbox{if $\langle h_i,w\lambda\rangle\le 0$, then
$f_iu_w=0$ and $e_i^{(\langle h_i,w\lambda\rangle)}=u_{s_iw}$.}  \nonumber
\eneq
\end{definition}
Hence if such $\{u_w\}_{w\in W}$ exists, then it is unique and
$u_w$ has weight $w\lambda$.
We denote $u_w$ by $S_wu$.

For $\lambda \in P$,
let us denote by $W(\lambda)$
the $U_q(\g)$-module
generated by $u_\lambda$
with the defining relation that
$u_\lambda$ is an extremal vector of weight $\lambda$ (see \cite{Kas94}).
This is in fact a set of infinitely many linear relations on $u_\lambda$.

Set $\varpi_i=\Lambda_i-c_i\Lambda_0 \in P^0$ for $i=1,2,\ldots,n$.
Then $\{\cl(\varpi_i)\}_{i=1,2,\ldots,n}$ forms a basis of $P^0_\cl$.
We call $\varpi_i$ a {\it level $0$ fundamental weight}.
 As shown in \cite{Kas02}, for each $i=1,\ldots,n$ there exists a $\uqpg$-module automorphism
$z_i \colon W(\varpi_i) \rightarrow W(\varpi_i)$
which sends $u_{\varpi_i}$ to $u_{\varpi_i + \mathsf{d_i} \delta}$,
where $\mathsf{d_i} \in \Z_{>0}$ denotes the generator of the free abelian group $\{ m \in \Z \ ; \ \varpi_i + m \delta \in W \varpi_i \}$.

We define the $\uqpg$-module $V(\varpi_i)$ by
$$V(\varpi_i) = W(\varpi_i) / (z_i-1) W(\varpi_i).$$
 It can be characterized as follows(\cite[Section 1.3]{AK}):
\begin{enumerate}
\item The weights of $V(\varpi_i)$ are contained in the convex hull of $W_\cl \cl(\varpi_i)$.
\item $\dim V(\varpi_i)_{\cl(\varpi_i)} = 1$.
\item For any $\mu \in W_\cl \cl(\varpi_i) \subset P^0_\cl$, we can associate a nonzero vector $u_\mu$ of weight $\mu$ such that
$$u_{s_i \mu} = \begin{cases}
f_i^{(\langle h_i, \mu \rangle )} u_\mu & \text{if} \ \langle h_i, \mu \rangle \geq 0, \\
e_i^{(-\langle h_i, \mu \rangle) } u_\mu & \text{if} \ \langle h_i, \mu \rangle \leq 0.
\end{cases}$$
\end{enumerate}
We call $V(\varpi_i)$ the {\it fundamental representation of $\uqpg$ of weight $\varpi_i$}.

Let $-$ be an involution of a $\uqpg$-module $M$ satisfying $\ol{au}=\bar a\bar u$
for any $a\in \uqpg$ and $u\in M$.
We call such an involution a \emph{bar involution}.

We say that a finite crystal $B$ with weight in $P_\cl^0$ is a {\it simple crystal}
if there exists $\lambda \in P_\cl^0$ such that $\# (B_{\lambda})=1$ and the weight of any extremal vector of $B$ is contained $W_\cl \lambda$.

If a $\uqpg$-module $M$ has a bar involution, a crystal base with simple crystal graph, and a global base, then we say that $M$ is a {\it good module}
(\cite[Section 8]{Kas02}). 
For example, the fundamental representation $V(\varpi_i)$ 
is a good $\uqpg$-module.
Any good module is an irreducible $\uqpg$-module.

Let $A$ be a commutative $\cor$-algebra and 
let $x$ be an invertible element of $A$.
For an $A \otimes_{\cor} \uqpg$-module $M$, let us denote by $\Phi_x(M)$ the $A \otimes_{\cor} \uqpg$-module constructed in the following: there exists an 
$A$-linear bijection
$\Phi_x \colon M \rightarrow \Phi_x(M)$ which satisfy
\begin{align*}
q^h \Phi_x( u) = \Phi_x(q^h u) \quad (h \in P^{\vee}_{\cl}), &&
e_i \Phi_x( u) = x^{\delta_{i,0}} \Phi_x(e_i u), &&
f_i \Phi_x( u) = x^{-\delta_{i,0}} \Phi_x(f_i u).
\end{align*}

For invertible elements in $x, y $ of $A$ and $A \otimes_{\cor} \uqpg$-modules $M, N$, we have
$$\Phi_x \Phi_y(M) \simeq \Phi_{xy}(M)$$
by $\Phi_x(\Phi_y(u))\leftrightarrow\Phi_{xy}(u)$,
and
$$\Phi_x(M \otimes_{\cor} N) \simeq \Phi_x(M) \otimes_{\cor} \Phi_x(N)$$
by $\Phi_x(u \otimes v)\leftrightarrow  \Phi_x(u) \otimes \Phi_x(v)$.

For an integrable $\uqpg$-module $M$, the {\it affinization of $M$} is given by
$$M_{\aff} \seteq \Phi_z(\cor[z,z^{-1}]\otimes_{\cor} M).$$
Note that by defining
$q^d \Phi_z(z^n\otimes u) = q^{\lan d, n\delta\ran} \Phi_z(z^n\otimes u)$ 
for $u \in M$, $M_{\aff}$ becomes a $U_q(\g)$-module.
(We need a slight modification for $\g=A^{(2)}_{2n}$.)
For example, we have $V(\varpi_i)_\aff \simeq 
\cor[z_i^{{1 / \mathsf{d_i}}}] \otimes_{\cor[z_i]} W(\varpi_i)$, 
and hence if $\mathsf{d_i} =1$, then $W(\varpi_i) \simeq V(\varpi_i)_\aff$ \cite[Theorem 5.15]{Kas02}. 

For $a \in \cor^\times$, we define $\uqpg$-module $M_a$ by
$$M_a \seteq M_\aff / (z-a)M_\aff\simeq\Phi_a(M).$$
It is called the \emph{evaluation module of $M$ at $a$}.

\bigskip

\subsection{$R$-matrices}
 \hfill

We recall the notion of the $R$-matrices of good modules following \cite[Section 8]{Kas02}.

Let $M_1$ and $M_2$ be good $\uqpg$-modules.
Set $(M_1)_{\aff}= \Phi_{z_1}(\cor[z_1^{\pm 1}] \otimes M_1)$, 
$(M_2)_{\aff}=\Phi_{z_2}( \cor[z_2^{\pm 1}] \otimes M_2)$, and let $u_1$ and $u_2$ be the dominant extremal weight vectors in $M_1$ and $M_2$, respectively.

Then there exists a $\uqpg$-module homomorphism
\begin{equation*}
\Rnorm_{M_1, M_2} \colon (M_1)_{\aff} \otimes (M_2)_{\aff} \rightarrow
\cor(z_1,z_2)\otimes_{\cor[z_1^{\pm1},z_2^{\pm1}]} \big((M_2)_{\aff} \otimes (M_1)_{\aff} \big),\end{equation*}
satisfying
\begin{equation} \label{eq:r-matrix commute with z}
\Rnorm_{M_1, M_2} \circ z_i = z_i \circ \Rnorm_{M_1, M_2} \ \text{for} \ i=1, 2
\end{equation}
and
\begin{equation*}\Rnorm_{M_1, M_2}(u_1 \otimes u_2) = u_2 \otimes u_1
\end{equation*}
 (\cite[Section 8]{Kas02}).

Let $d_{M_1,M_2}(u) \in \cor[u]$ be a monic polynomial
with the smallest degree such that
the image of $d_{M_1,M_2}(z_1/z_2) \Rnorm_{M_1, M_2}$ 
is contained in $(M_2)_{\aff} \otimes (M_1)_{\aff}$. 
We call $\Rnorm_{M_1, M_2}$ the {\it normalized R-matrix} and $d_{M_1,M_2}$ the {\it denominator of $\Rnorm_{M_1, M_2}$}.
Since $(M_1)_{x} \otimes (M_2)_{y}$ is irreducible 
for generic $x,y\in\cor^\times$, we have
\begin{equation}\label{eq:r^2=1}
\Rnorm_{M_2, M_1} \circ \Rnorm_{M_1, M_2} = 1_{(M_1)_{\aff} \otimes (M_2)_{\aff}}.
\end{equation}

It also satisfies the Yang-Baxter equation 
\begin{equation} \label{eq:r_YB}
(\Rnorm_{M_1, M_2} \otimes 1 )\circ (1 \otimes \Rnorm_{M_1, M_3}) \circ (\Rnorm_{M_2, M_3} \otimes 1)
=(1 \otimes \Rnorm_{M_2, M_3}) \circ (\Rnorm_{M_1, M_3} \otimes 1) \circ (1 \otimes \Rnorm_{M_1, M_2}).
\end{equation}

The following fact is proved in \cite[Proposition 9.3]{Kas02}.
\Lemma\label{lem:zero}
The zeroes of $d_{M_1,M_2}(z)$ belong to
$\C[[q^{1/m}]]q^{1/m}$ for some $m\in\Z_{>0}$.
\enlemma

\begin{example} \label{ex:R-matrix}
When $\g=\widehat{\mathfrak{sl}_N}$, the normalized $R$-matrices for the fundamental representations are given as follows (see, for example, \cite{DO94}):
 \begin{align*}
   \Rnorm_{V(\varpi_k), V(\varpi_{\ell})} =
    \sum_{0 \leq i \leq \min\{k,\ell\}} \prod_{s=1}^{i} \frac{1-(-q)^{|k-\ell|+2s} z }{z- (-q)^{|k-\ell|+2s}} \
    P_{\varpi_{\max\{k,\ell\}+i}+\varpi_{\max\{k,\ell\}-i}},
 \end{align*}
where $z=z_1/z_2$ and $P_\lambda$ denotes the projection from $V(\varpi_k) \otimes V(\varpi_{\ell})$ to the direct summand $V(\lambda)$ as a $U_q(\mathfrak{sl}_N)$-module.

Note that $ \Rnorm_{V(\varpi_k), V(\varpi_{\ell})}$ has simple poles at $z=(-q)^{|k-\ell|+2s}$ for
$1 \leq s \leq \min\{k, \ell\}$.
\end{example}

\bigskip
%%%%%%%%%%%%%%%%%%%%%%%%%%%%%%%%%%%%% The functor

\subsection{The action of $R^I(n)$ on $\bV$} \label{subsec:action}

Let $\{V_s\}_{s\in \mathcal{S}}$ be a family of good $\uqpg$-modules
 and  let $\lambda_s$ be a dominant extremal weight of $V_s$ and $v_s$ 
a dominant extremal weight vector in $V_s$ of weight $\lambda$.

Let $\mathbb{T}=\cor^\times$ and
let $I$ be a finite subset of $\mathcal{S} \times \mathbb{T}$. 
For each $i \in I$, let $X \colon I \rightarrow \mathbb{T}$ and 
$S \colon I \rightarrow \mathcal S$ be the maps defined by $i=(S(i), X(i))$.

For each $i,j \in I $, set
\begin{equation*}
P_{ij}(u,v) =(v-u)^{d_{ij}},
\end{equation*}
where $d_{ij}$ denotes the order of the zero of $d_{V_{S(i)},V_{S(j)}}(z_1 / z_2)$ at $z_1 / z_2 = {X(i) / X(j)}$.

Let $R^{I}$ be the \KLR\ associated with
\begin{equation}\label{eq:Q_omega} Q_{ij}(u,v) = P_{ij}(u,v)P_{ji}(v,u) \end{equation}
for $i,j \in I$.

\begin{remark}
Consider the quiver $\Gamma_I \seteq (I, \Omega)$ with the set of vertices $I$ and the set of oriented edges $\Omega$
such that
$$\# \set{h \in \Omega}{s(h) = i, t(h)=j } = d_{ij},$$
where $s(h)$ and $t(h)$ denote the source and the target of an oriented edge $h \in \Omega$.

Lemma~\ref{lem:zero} implies that $X(i)/X(j)\in\C[[q]]q$ if $d_{ij}>0$.
Hence we obtain 
$$\text{if $d_{ij} > 0$, then $d_{ji} =0$.}$$
Thus the quiver $\Gamma_I$ has neither loops nor 2-cycles.
The underlying unoriented graph of $\Gamma_I$ gives a symmetric Cartan datum and the polynomials in \eqref{eq:Q_omega} coincide with the ones used in \cite{VV09} associated with the symmetric Cartan datum of $\Gamma_I$.
\end{remark}

Set
\begin{align*}
&\bP \seteq \soplus_{\nu \in I^n} \cor [x_1, \ldots, x_n] e(\nu), \\
& \bPh\seteq \soplus_{\nu \in I^n} \Ohat_{\mathbb{T}^n, X(\nu)} e(\nu), \\
&\bK\seteq \soplus_{\nu \in I^n} \bK[\nu]e(\nu),\end{align*}
where
\begin{equation*}
\Ohat_{\mathbb{T}^n, X(\nu)} = \cor [[X_1 - X(\nu_1), \ldots, X_n-X(\nu_n)]]
\end{equation*}
 is the completion of the local ring of $\mathbb{T}^n$ at $X(\nu)\seteq(X(\nu_1),\ldots,X(\nu_n))$
 and $\bK[\nu]$ is the field of quotients of $\Ohat_{\mathbb{T}^n, X(\nu)}$.

Then we have
\begin{equation*}
\bP\hookrightarrow\bPh\hookrightarrow\bK
\end{equation*}
as $\cor$-algebras, where the first arrow is given by
\begin{equation*}
x_k e(\nu) \mapsto (X(\nu_k)^{-1}X_k -1) e(\nu).
\end{equation*}
Note that
\begin{equation*}
\cor [X_1^{\pm1}, \ldots, X_n^{\pm1}] \subset \Ohat_{\mathbb{T}^n, X(\nu)} \ \text{for all} \ \nu \in I^n.
\end{equation*}
Let
\begin{align*}
\cor [S_n] \seteq \bigoplus_{w \in S_n} \cor r_w
\end{align*}
be the group algebra of $S_n$;
i.e., the $\cor$-algebra with the defining relations
\begin{align} \label{eq:rel of S_n}
&r_a^2 =1 && a = 1, \ldots, n-1  \nonumber \\
&r_w r_{w'} = r_{ww'}, \\
&r_a r_{a+1} r_a = r_{a+1} r_a r_{a+1} && a=1,\ldots,n-2 \nonumber
\end{align}
where $r_a = r_{s_a} \ (1 \leq a <n)$.

The symmetric group $S_n$ acts on $\bP$, $\bPh$, $\bK$ 
from the left and we have
\begin{equation*}
\bP\otimes {\cor[S_n]} \hookrightarrow \bPh\otimes {\cor[S_n]} \hookrightarrow \bK\otimes {\cor[S_n]} 
\end{equation*}
as algebras. Here the algebra structure on $\bK\otimes {\cor[S_n]}$ is given by
\begin{align} \label{eq:rel of Khat S_n}
r_w f =w(f) r_w & \quad \text{for $f \in \bK$, $w \in S_n$.}
\end{align}

Then $\bK$
may be regarded as a right $\bK \otimes {\cor[S_n]}$-module 
by
$a (f\otimes r_w)=w^{-1}(af)$ ($a,f\in \bK$ and $w\in S_n$).

Set
\begin{equation*}
e(\nu) \tau_a = \begin{cases}
e(\nu) r_a P_{\nu_a, \nu_{a+1}}(x_{a+1}, x_a) & \text{if} \ \nu_a \neq \nu_{a+1} \\
e(\nu) (r_a -1)(x_a -x_{a+1})^{-1} & \text{if} \ \nu_a = \nu_{a+1}.
\end{cases}
\end{equation*}

Then the subalgebra of $\bK\otimes {\cor[S_n]}$ generated by
\begin{align*}
e(\nu) \ (\nu \in J^n), && x_a \ (1 \leq a \leq n), && e(\nu)\tau_a \ (1 \leq a \leq n-1)
\end{align*}
is isomorphic to the \KLR\ $R^I(n)$ of degree $n$ associated with $Q_{ij}(u,v) = P_{ij}(u,v) P_{ji}(v,u)$.
\cite[Proposition 3.12]{R08}, \cite[Theorem 2.5]{KL09}.

\bigskip
For each $\nu =(\nu_1,\ldots, \nu_n) \in I^n$, we set
$$
V_\nu = \Phi_{X_1}(\cor[X_1^{\pm1}]\otimes V_{S(\nu_1)}) \otimes \cdots
\otimes \Phi_{X_n}(\cor[X_n^{\pm1}]\otimes V_{S(\nu_n)})$$
which is a $\cor[X_1^{\pm1},\ldots, X_n^{\pm1}]\otimes\uqpg$-module.
Then we define
\begin{align}
\ba{ll}
\bV&\seteq \soplus_{\nu \in I^n} 
\Ohat_{\mathbb{T}^n, X(\nu)}\otimes _{\cor[X_1^{\pm1},\ldots,X_n^{\pm1}]}V_\nu e(\nu), \\
\bVK&\seteq \bK\tens_{\bP}\bV.\ea
\label{eq:Vhat} 
\end{align}

For each $\nu \in I^n$ and $a=1,\ldots, n-1$, there exists a $\uqpg$-module homomorphism
\begin{equation*}
R^{\nu}_{a, a+1} \colon V_\nu \rightarrow
\cor(X_1,\ldots,X_n) \otimes_{\cor[X_1^{\pm1}, \ldots X_n^{\pm1}]} V_{s_a(\nu)}
\end{equation*}
which is given by
\begin{equation*}
 v_1 \otimes \cdots \otimes v_a \otimes v_{a+1} \otimes \cdots \otimes v_n
\mapsto v_1 \otimes \cdots \otimes \Rnorm_{V_{S(\nu_a)},V_{S(\nu_{a+1})}}(v_a \otimes v_{a+1}) \otimes \cdots \otimes v_n
\end{equation*}
for $v_k \in \Phi_{X_k}(V_{S(\nu_k)})$ $(1 \leq k \leq n)$.

It follows that
\begin{align}
&R^{\nu}_{a, a+1} \circ X_k = X_{s_a(k)} \circ R^{\nu}_{a, a+1} && \text{from \eqref{eq:r-matrix commute with z},} \nonumber\\
&R^{s_a(\nu)}_{a, a+1} \circ R^{\nu}_{a, a+1} = 1_{V_{\nu}}&& \text{from \eqref{eq:r^2=1},} \nonumber \\
&R^{s_{a+1}s_a(\nu)}_{a, a+1} \circ R^{s_a (\nu)}_{a+1, a+2} \circ R^{\nu}_{a, a+1} =
R^{s_a s_{a+1}(\nu)}_{a+1, a+2} \circ R^{s_{a+1}(\nu)}_{a, a+1} \circ R^{\nu}_{a+1, a+2}
&& \text{from \eqref{eq:r_YB}.} \ \nonumber
\end{align}
Set $d_{\nu_a,\nu_{a+1}}(u)=d_{V_{S(\nu_a)}, V_{S(\nu_{a+1})}} (u)$. Then,
\begin{equation*}
d_{\nu_a,\nu_{a+1}}(X_{a+1}/X_a) R^{\nu}_{a, a+1} \colon V_{\nu} \rightarrow V_{s_a(\nu)}.
\end{equation*}

The algebra $\bK \otimes {\cor[S_n]}$ acts on $\bVK$ from the right, where
\begin{align*}
e(\nu) r_a : &\widehat{\mathbb K}_\nu \otimes_{\cor[X_1^{\pm1}, \ldots X_n^{\pm1}]} V_\nu \\
&\rightarrow \widehat{\mathbb K}_{s_a(\nu)} \otimes_{\cor(X_1,\ldots,X_n)} \big( \cor(X_1,\ldots,X_n) \otimes_{\cor[X_1^{\pm1}, \ldots X_n^{\pm1}]} V_{s_a(\nu)}\big)
\end{align*}
is given by
\begin{equation*}
(f \otimes v) e(\nu) r_a = s_a(f) e(s_a(\nu)) \otimes R^{\nu}_{a, a+1}(v)
\end{equation*}
for $f \in \widehat{\mathbb K}_{\nu}$, $v \in \cor(X_1,\ldots,X_n) \otimes_{\cor[X_1^{\pm1}, \ldots, X_n^{\pm1}]} V_\nu$.
The subalgebra $\widehat{\mathbb{K}}_n$ acts by the multiplication.
The relations \eqref{eq:rel of S_n} and \eqref{eq:rel of Khat S_n} follow from the properties of normalized $R$-matrices and hence we have a well-defined action of the algebra $\widehat{\mathbb{K}}_n \otimes {\cor[S_n]}$ on $\Vhat^n$.
Since the normalized $R$-matrices are $\uqpg$-module homomorphisms, 
the right action of $\widehat{\mathbb{K}}_n \otimes {\cor[S_n]}$ 
commutes with the left action of $\uqpg$ on $\Vhat^n$.

\begin{theorem}
The subspace $\Vhat^n$ of $\Vhat_K^n$ is stable under the action of the subalgebra $R^I(n)$ of $\widehat{\mathbb{K}}_n \otimes {\cor[S_n]}$.
In particular, $\Vhat^n$ has a structure of
$(\uqpg, R^I(n))$-bimodule.
\begin{proof}
It is obvious that $\Vhat^n$
is stable by the actions of $e(\nu)$ $(\nu \in I^n)$ and 
$x_a$ $(1 \leq a \leq n)$.
Thus it is enough to show that $\Vhat^n$ is stable under $e(\nu) \tau_a$ $(\nu \in I^n, \ 1 \leq a < n)$.

Assume $\nu_a \neq \nu_{a+1}$. Then we have
\begin{align}
&e(\nu) r_a P_{\nu_a,\nu_{a+1}}(x_{a+1},x_{a}) \nonumber\\
=&e(\nu) r_a d_{\nu_a,\nu_{a+1}}(X_{a+1}/X_a)
\Big( e(s_a(\nu)) \dfrac{ P_{\nu_a,\nu_{a+1}}(x_{a+1},x_{a})}{d_{\nu_a,\nu_{a+1}}(X_{a+1}/X_a)}
\Big) \nonumber\\
=&e(\nu) r_a d_{\nu_a,\nu_{a+1}}(X_{a+1}/X_a)
\Big( e(s_a(\nu)) \dfrac{ (X(\nu_{a+1})^{-1}X_a-X(\nu_a)^{-1}X_{a+1})^{d_{\nu_a, \nu_{a+1}}}} {d_{\nu_a,\nu_{a+1}}(X_{a+1}/X_a)} \Big)\nonumber.
\end{align}
Since $d_{\nu_a, \nu_{a+1}}$ is the multiplicity of the zero of the polynomial
$d_{\nu_a,\nu_{a+1}}(X_{a+1}/X_a)$ at $X_{a+1}/X_a=X(\nu_a) / X(\nu_{a+1})$,
we have
$$\dfrac{ (X(\nu_{a+1})^{-1}X_a-X(\nu_a)^{-1}X_{a+1})^{d_{\nu_a, \nu_{a+1}}}} {d_{\nu_a,\nu_{a+1}}(X_{a+1}/X_a)} \in \Ohat_{\mathbb{T}^n, X(s_a(\nu))}.$$

It follows that
\begin{align*}
&\Big(\Ohat_{\mathbb T^n, X(\nu) } \otimes_{\cor[X_1^{\pm1}, \ldots X_n^{\pm1}]} V_\nu \Big)
e(\nu) \tau_a \\
=&\Big(\big(\Ohat_{\mathbb T^n, X(\nu) } \otimes_{\cor[X_1^{\pm1}, \ldots X_n^{\pm1}]} V_\nu \big)
e(\nu) r_a \Big) P_{\nu_a,\nu_{a+1} }(x_{a+1}, x_a) \\
\subset &\Big(\Ohat_{\mathbb T^n, X(s_a(\nu)) } \otimes_{\cor[X_1^{\pm1}, \ldots X_n^{\pm1}]} V_{s_a(\nu)}
 \Big) \dfrac{(X(\nu_{a+1})^{-1}X_a-X(\nu_a)^{-1}X_{a+1})^{d_{\nu_a, \nu_{a+1}}}}%
{d_{\nu_a, \nu_{a+1}}(X_{a+1} / X_a)} \\
\subset & \Ohat_{\mathbb T^n, X(s_a(\nu)) } \otimes_{\cor[X_1^{\pm1}, \ldots X_n^{\pm1}]} V_{s_a(\nu)},
\end{align*}
as desired.

Assume $\nu_a=\nu_{a+1}$.
Then $\Rnorm_{V_{S(\nu_a)},V_{S(\nu_a)}}$ does not have a pole at
$X_a=X_{a+1}$ by Lemma~\ref{lem:zero}.
Since $\Phi_x(V_{S(\nu_a)})\otimes \Phi_x(V_{S(\nu_{a})})$ is irreducible for any $x\in\cor^\times$, we obtain
$\Rnorm_{V_{S(\nu_a)},V_{S(\nu_a)}}\vert_{X_a=X_{a+1}}=\id$.
Therefore, we have
\begin{align*}
&\Big(\Ohat_{\mathbb T^n, X(\nu) } \otimes_{\cor[X_1^{\pm1}, \ldots X_n^{\pm1}]} V_\nu \Big)
e(\nu) \tau_a \\
=&\Big(\Ohat_{\mathbb T^n, X(\nu) } \otimes_{\cor[X_1^{\pm1}, \ldots X_n^{\pm1}]} V_\nu \Big)
e(\nu) (r_a -1)(x_a -x_{a+1})^{-1} \\
=&\Big(\Ohat_{\mathbb T^n, X(\nu) } \otimes_{\cor[X_1^{\pm1}, \ldots X_n^{\pm1}]} V_\nu \Big)
e(\nu) X(\nu_a) (r_a -1) (X_a -X_{a+1})^{-1} \\
\subset & \Ohat_{\mathbb T^n, X(\nu) } \otimes_{\cor[X_1^{\pm1}, \ldots X_n^{\pm1}]} V_{\nu},
\end{align*}
as desired.
\end{proof}
\end{theorem}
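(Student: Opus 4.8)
The plan is to verify directly that $\bV$ is stable under the generators of $R^I(n)$ inside $\bVK=\bK\otimes_{\bP}\bV$. Since $R^I(n)$ is generated by the idempotents $e(\nu)$, the elements $x_a$, and the elements $e(\nu)\tau_a$, and since $\bV=\soplus_\nu\Ohat_{\mathbb{T}^n,X(\nu)}\otimes_{\cor[X^{\pm1}]}V_\nu\,e(\nu)$ is visibly stable under the $e(\nu)$ (these just project onto summands) and under the $x_a$ (multiplication by $X(\nu_a)^{-1}X_a-1$, which preserves $\Ohat_{\mathbb{T}^n,X(\nu)}$), the whole problem reduces to showing stability under $e(\nu)\tau_a$ for each $\nu\in I^n$ and $1\le a<n$. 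I would split this into the two cases occurring in the definition of $e(\nu)\tau_a$.

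In the case $\nu_a\neq\nu_{a+1}$, the action of $e(\nu)\tau_a$ is $e(\nu)r_a\,P_{\nu_a,\nu_{a+1}}(x_{a+1},x_a)$. The issue is that $e(\nu)r_a$ alone carries $V_\nu$ only into the localization $\cor(X_1,\dots,X_n)\otimes V_{s_a(\nu)}$, because $R^\nu_{a,a+1}$ has poles; more precisely $d_{\nu_a,\nu_{a+1}}(X_{a+1}/X_a)R^\nu_{a,a+1}$ already lands in $V_{s_a(\nu)}$. So I would factor $P_{\nu_a,\nu_{a+1}}(x_{a+1},x_a)=d_{\nu_a,\nu_{a+1}}(X_{a+1}/X_a)\cdot\bigl(P_{\nu_a,\nu_{a+1}}(x_{a+1},x_a)/d_{\nu_a,\nu_{a+1}}(X_{a+1}/X_a)\bigr)$, use the first factor to absorb the pole of $R^\nu_{a,a+1}$, and then observe that the definition $P_{ij}(u,v)=(v-u)^{d_{ij}}$ together with the translation $x_ke(\nu)=(X(\nu_k)^{-1}X_k-1)e(\nu)$ gives $P_{\nu_a,\nu_{a+1}}(x_{a+1},x_a)e(s_a(\nu))=(X(\nu_{a+1})^{-1}X_a-X(\nu_a)^{-1}X_{a+1})^{d_{\nu_a,\nu_{a+1}}}e(s_a(\nu))$. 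Since $d_{\nu_a,\nu_{a+1}}$ is exactly the multiplicity of the zero of $d_{\nu_a,\nu_{a+1}}(X_{a+1}/X_a)$ at $X_{a+1}/X_a=X(\nu_a)/X(\nu_{a+1})$, the remaining ratio is a unit-denominator element, i.e. lies in $\Ohat_{\mathbb{T}^n,X(s_a(\nu))}$, and the composite maps $\Ohat_{\mathbb{T}^n,X(\nu)}\otimes V_\nu$ into $\Ohat_{\mathbb{T}^n,X(s_a(\nu))}\otimes V_{s_a(\nu)}$ as required.

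In the case $\nu_a=\nu_{a+1}$, the action is $e(\nu)(r_a-1)(x_a-x_{a+1})^{-1}$, and the potential problem is the pole of $(x_a-x_{a+1})^{-1}$ along the diagonal $X_a=X_{a+1}$. Here I would invoke Lemma~\ref{lem:zero}: since the zeros of $d_{M,M}(z)$ lie in $\C[[q^{1/m}]]q^{1/m}$ and in particular avoid $z=1$, the $R$-matrix $\Rnorm_{V_{S(\nu_a)},V_{S(\nu_a)}}$ has no pole at $X_a=X_{a+1}$, and by irreducibility of $\Phi_x(V_{S(\nu_a)})^{\otimes2}$ for generic $x$ its restriction to that diagonal is the identity. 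Consequently $r_a-1$ kills the leading term of any section of $V_\nu$ along $X_a=X_{a+1}$, so $(r_a-1)$ is divisible by $(X_a-X_{a+1})$ on $V_\nu$, which cancels the pole; after rewriting $(x_a-x_{a+1})^{-1}e(\nu)=X(\nu_a)(X_a-X_{a+1})^{-1}e(\nu)$ one gets an operator preserving $\Ohat_{\mathbb{T}^n,X(\nu)}\otimes V_\nu$. The conclusion that $\bV$ is a $(\uqpg,R^I(n))$-bimodule then follows because the left $\uqpg$-action and the right $\bK\otimes\cor[S_n]$-action already commute (the $R$-matrices being $\uqpg$-linear) and $R^I(n)$ sits inside $\bK\otimes\cor[S_n]$. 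I expect the main obstacle to be the diagonal case: one must argue carefully that $\Rnorm|_{X_a=X_{a+1}}=\id$ and that this forces the required divisibility of $r_a-1$, rather than merely holomorphy of the individual operator.
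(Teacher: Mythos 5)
Your proposal follows the paper's own argument essentially step for step: the same reduction to the generators $e(\nu)\tau_a$, the same factorization of $P_{\nu_a,\nu_{a+1}}(x_{a+1},x_a)$ through $d_{\nu_a,\nu_{a+1}}(X_{a+1}/X_a)$ using that $d_{\nu_a,\nu_{a+1}}$ is exactly the order of the zero, and the same treatment of the diagonal case via Lemma~\ref{lem:zero} plus irreducibility forcing $\Rnorm\vert_{X_a=X_{a+1}}=\id$ so that $r_a-1$ is divisible by $X_a-X_{a+1}$. This is correct and is the paper's proof.
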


Since $\Vhat^n$ is a $(\uqpg, R^I(n))$-bimodule, we can construct the following functor:
\begin{align*}
\F_n \colon R^I(n) \gmod &\rightarrow \uqpg \modl \label{eq:the functor}\\
M &\mapsto \F_n(M) \seteq \Vhat^n \otimes_{R^I(n)} M,
\end{align*}
where $\uqpg \modl$ denotes the category of finite-dimensional $\uqpg$-modules.

\begin{theorem} \label{thm:conv to tensor}
Let $M_1 \in R^I(n_1) \gmod$ and $M_2 \in R^I(n_2) \gmod$.
Then there exists a canonical isomorphism of $\uqpg$-modules
\begin{equation*}
\F_{n}(M_1 \circ M_2) \simeq \F_{n_1}(M_1) \otimes \F_{n_2}(M_2),
\end{equation*}
where $n=n_1+n_2$.
\end{theorem}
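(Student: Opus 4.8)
The plan is to unwind both sides of the claimed isomorphism directly from the definition $\F_n(M)=\Vhat^n\otimes_{R^I(n)}M$ and the definition of the convolution product $M_1\circ M_2 = R(n)\otimes_{R(n_1)\otimes R(n_2)}(M_1\otimes M_2)$, reducing the statement to an isomorphism between the two bimodules obtained from $\Vhat^{n}$ and from $\Vhat^{n_1}\otimes\Vhat^{n_2}$. Concretely, using associativity of tensor products,
\begin{equation*}
\F_n(M_1\circ M_2)=\Vhat^n\otimes_{R^I(n)}\bigl(R^I(n)\otimes_{R^I(n_1)\otimes R^I(n_2)}(M_1\otimes M_2)\bigr)\simeq\Vhat^n\otimes_{R^I(n_1)\otimes R^I(n_2)}(M_1\otimes M_2),
\end{equation*}
while $\F_{n_1}(M_1)\otimes\F_{n_2}(M_2)=(\Vhat^{n_1}\otimes_{R^I(n_1)}M_1)\otimes_{\cor}(\Vhat^{n_2}\otimes_{R^I(n_2)}M_2)\simeq(\Vhat^{n_1}\otimes_{\cor}\Vhat^{n_2})\otimes_{R^I(n_1)\otimes R^I(n_2)}(M_1\otimes M_2)$. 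So the crux is to produce a natural isomorphism of $(\uqpg,\,R^I(n_1)\otimes R^I(n_2))$-bimodules
\begin{equation*}
\Vhat^{n_1}\otimes_{\cor}\Vhat^{n_2}\;\xrightarrow{\ \sim\ }\;\Vhat^n\big|_{R^I(n_1)\otimes R^I(n_2)}.
\end{equation*}

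First I would construct this map on the level of the underlying spaces. For $\mu\in I^{n_1}$ and $\mu'\in I^{n_2}$ write $\mu*\mu'\in I^{n}$ for the concatenation; then $V_{\mu*\mu'}=V_\mu\otimes V_{\mu'}$ (as $\cor[X^{\pm1}]\otimes\uqpg$-modules, after the obvious relabelling of the variables $X_1,\dots,X_n$), and the local ring $\Ohat_{\tv^{n},X(\mu*\mu')}$ contains $\Ohat_{\tv^{n_1},X(\mu)}\otimes_{\cor}\Ohat_{\tv^{n_2},X(\mu')}$ as a dense subring. This gives a canonical $\cor$-linear, $\uqpg$-linear map $\Vhat^{n_1}\otimes\Vhat^{n_2}\to\Vhat^{n}$, compatible with the idempotents $e(\mu)\otimes e(\mu')\mapsto e(\mu*\mu')$ and with the polynomial generators $x_k$. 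The content is then to check it intertwines the $R^I(n_1)\otimes R^I(n_2)$-action on the source with the restricted $R^I(n)$-action on the target: the first factor $R^I(n_1)$ uses $\tau_1,\dots,\tau_{n_1-1}$ and the second uses $\tau_{n_1+1},\dots,\tau_{n-1}$, none of which involves the "crossing" generator $\tau_{n_1}$, and on these generators the compatibility is immediate from the explicit formula $e(\nu)\tau_a=e(\nu)r_aP_{\nu_a,\nu_{a+1}}(x_{a+1},x_a)$ (resp.\ $e(\nu)(r_a-1)(x_a-x_{a+1})^{-1}$) because $r_a$ for $a<n_1$ or $a>n_1$ acts on $V_{\mu*\mu'}=V_\mu\otimes V_{\mu'}$ as the corresponding $R$-matrix acting only in one tensor factor.

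It remains to show this intertwiner is bijective. Injectivity and density are clear from the description of $\Ohat_{\tv^n,X(\nu)}$ as a completion. For surjectivity/bijectivity as $R^I(n)\otimes\cdots$-modules one passes to the localized versions: $\bVK$ is by construction $\bK\otimes_{\bP}\Vhat^n$, and over the larger ring $\bK$ the concatenation map is visibly an isomorphism $\bVK[n_1]\otimes\bVK[n_2]\cong\bVK$ componentwise (both sides are free of the same rank over the appropriate field of fractions, with matching $S_{n_1}\times S_{n_2}$-action). Then one checks that the $R^I(n_1)\otimes R^I(n_2)$-submodule of $\bVK$ generated by $\Vhat^{n_1}\otimes\Vhat^{n_2}$ is all of $\Vhat^n$: since $R^I(n)=\bigoplus_{w\in S_n}(\text{polynomials})\,\tau_w$ and minimal coset representatives for $(S_{n_1}\times S_{n_2})\backslash S_n$ are built from the crossing generators $\tau_{n_1}$, one needs that repeatedly applying $\tau_{n_1},\tau_{n_1\pm1},\dots$ to $\Vhat^{n_1}\otimes\Vhat^{n_2}$ stays inside $\Vhat^n$ and spans it over $\bK$ — which follows from the previous Theorem (stability of $\Vhat^n$ under $R^I(n)$) together with the fact that, over $\bK$, the $\tau_w$ for $w$ a coset representative give a $\bK$-basis of $R^I(n)$ modulo $R^I(n_1)\otimes R^I(n_2)$.

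The main obstacle I expect is precisely this last bijectivity: one must rule out that $\Vhat^n$ is strictly larger than the $R^I(n_1)\otimes R^I(n_2)$-span of $\Vhat^{n_1}\otimes\Vhat^{n_2}$ inside $\bVK$, equivalently that the natural map is surjective and not merely injective with dense image. I would handle this by the rank count over $\bK$ described above — comparing, component by component over each $\bK[\nu]$, the free module $\Vhat^n e(\nu)$ localized at $\bK$ with the image, and using that both have the same $\bK$-dimension because $R^I(n)$ is free of rank $|S_n|\cdot(\text{rank of polynomial part})$ over $R^I(n_1)\otimes R^I(n_2)$ with basis indexed by the coset representatives (a consequence of the PBW-type basis theorem for KLR algebras, \cite[Theorem 2.5]{KL09}). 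Naturality in $M_1$ and $M_2$ is then automatic since every map in sight is constructed from the bimodule structures alone.
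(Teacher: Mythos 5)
Your reduction of both sides to $\Vhat^{n}\otimes_{R^I(n_1)\otimes R^I(n_2)}(M_1\otimes M_2)$ and $(\Vhat^{n_1}\otimes\Vhat^{n_2})\otimes_{R^I(n_1)\otimes R^I(n_2)}(M_1\otimes M_2)$ agrees with the paper, but the step you then hang everything on --- a bimodule isomorphism $\Vhat^{n_1}\otimes_{\cor}\Vhat^{n_2}\isoto\Vhat^{n}$ --- is false, and the arguments you sketch for its bijectivity do not work. Componentwise, for $\nu\in I^n$ with $\nu'=(\nu_1,\dots,\nu_{n_1})$, $\nu''=(\nu_{n_1+1},\dots,\nu_n)$, the target involves the completion $\Ohat_{\tv^n,X(\nu)}=\cor[[X_1-X(\nu_1),\dots,X_n-X(\nu_n)]]$, whereas the source only supplies the algebraic tensor product $\Ohat_{\tv^{n_1},X(\nu')}\otimes_{\cor}\Ohat_{\tv^{n_2},X(\nu'')}$, a proper subring (a power series in all $n$ variables is in general not a finite sum of products of series in the first $n_1$ and the last $n_2$ variables); so the concatenation map is injective with ``dense'' image but not surjective. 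The localized rank count fails for the same reason: $\bVK e(\nu)$ is free over the field $\bK[\nu]$, while $\bVK[{n_1}]e(\nu')\otimes\bVK[{n_2}]e(\nu'')$ is free only over the ring $\bK[{\nu'}]\otimes\bK[{\nu''}]$, which is strictly smaller than $\bK[\nu]$, so ``same rank'' compares modules over different base rings and gives no bijection. Finally, your surjectivity argument via coset representatives invokes the crossing generators $\tau_{n_1},\dots$, which do not lie in $R^I(n_1)\otimes R^I(n_2)$, so it cannot show that the $R^I(n_1)\otimes R^I(n_2)$-span of $\Vhat^{n_1}\otimes\Vhat^{n_2}$ inside $\bVK$ is all of $\Vhat^{n}$ --- and indeed it is not.

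The missing idea, which is exactly how the paper's proof proceeds, is that the comparison need only be made \emph{after} tensoring with finite-dimensional coefficients, and this is where the hypothesis $M_i\in R^I(n_i)\gmod$ enters. If $L_1$ and $L_2$ are finite-dimensional modules over $\Ohat_{\tv^{n_1},X(\nu')}$ and $\Ohat_{\tv^{n_2},X(\nu'')}$, they are killed by powers of the maximal ideals, and then $L_1\otimes L_2\to\Ohat_{\tv^n,X(\nu)}\otimes_{\Ohat_{\tv^{n_1},X(\nu')}\otimes\Ohat_{\tv^{n_2},X(\nu'')}}(L_1\otimes L_2)$ is an isomorphism; consequently $(\Vhat^{n_1}\otimes\Vhat^{n_2})\otimes_{\bP[{n_1}]\otimes\bP[{n_2}]}(L_1\otimes L_2)\isoto\Vhat^{n}\otimes_{\bP[{n_1}]\otimes\bP[{n_2}]}(L_1\otimes L_2)$ for finite-dimensional $L_1,L_2$. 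Writing each of $(\Vhat^{n_1}\otimes\Vhat^{n_2})\otimes_{R^I(n_1)\otimes R^I(n_2)}(M_1\otimes M_2)$ and $\Vhat^{n}\otimes_{R^I(n_1)\otimes R^I(n_2)}(M_1\otimes M_2)$ as the quotient of the corresponding tensor product over $\bP[{n_1}]\otimes\bP[{n_2}]$ by the relations $va\otimes u-v\otimes au$ with $a\in R^I(n_1)\otimes R^I(n_2)$ then finishes the argument. So your framework is sound, but the bijectivity you need is only true relative to finite-dimensional coefficients, not at the level of the bimodules themselves; as written, your proof does not close.
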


\begin{proof}
For each $\nu=(\nu_1,\ldots,\nu_n) \in I^{n}$, set
$\nu'=(\nu_1, \ldots, \nu_{n_1})$ and $\nu''=(\nu_{n_1+1}, \ldots, \nu_{n})$.
Then we have an algebra homomorphism
$\Ohat_{\mathbb{T}^{n_1}, X(\nu')} \otimes \Ohat_{\mathbb T^{n_2}, X(\nu'')}
\to  \Ohat_{\mathbb T^n, X(\nu)}$.
Moreover, for any finite-dimensional $\Ohat_{\mathbb{T}^{n_1}, X(\nu')}$-module
$L_1$ and any finite-dimensional $\Ohat_{\mathbb{T}^{n_1}, X(\nu'')}$-module $L_2$,
the induced morphism 
$$L_1\otimes L_2\to \Ohat_{\mathbb T^n, X(\nu)}
\tens_{\Ohat_{\mathbb{T}^{n_1}, X(\nu')} \otimes \Ohat_{\mathbb T^{n_2}, X(\nu'')}}(L_1\otimes L_2)$$
is an isomorphism.
Hence for any finite-dimensional $\mathbb{P}_{n_1}$-module
$L_1$ and any finite-dimensional $\mathbb{P}_{n_2}$-module $L_2$,
the induced morphism 
$$
(\Vhat^{n_1}\otimes \Vhat^{n_2})\tens_{\mathbb{P}_{n_1}\otimes\,\mathbb{P}_{n_2}}(L_1\otimes L_2)
\to
\Vhat^n\tens_{\mathbb{P}_{n_1}\otimes\,\mathbb{P}_{n_2}}(L_1\otimes L_2)
$$
is an isomorphism.

The module
$\Vhat^n \otimes_{R^I(n)}(M_1 \circ M_2)\simeq \Vhat^n
\otimes_{R^I(n_1) \otimes R^I(n_2)} (M_1 \otimes M_2)$
is the quotient of
$\Vhat^n\otimes_{\mathbb{P}_{n_1}\otimes\mathbb{P}_{n_2}} (M_1 \otimes M_2)$
by the submodule generated by
$va\otimes u-v\otimes au$  where $a\in R^I(n_1) \otimes R^I(n_2)$,
$v\in \Vhat^n$, $u\in M_1 \otimes M_2$.
A similar result holds also for $\bl\Vhat^{n_1}\otimes 
\Vhat^{n_2}\br\otimes_{R^I(n_1) \otimes R^I(n_2)} (M_1 \otimes M_2)$.
Thus we obtain the desired result
$$\bl\Vhat^{n_1}\otimes \Vhat^{n_2}\br\otimes_{R^I(n_1) \otimes R^I(n_2)} (M_1 \otimes M_2)
\simeq \Vhat^n\otimes_{R^I(n_1) \otimes R^I(n_2)} (M_1 \otimes M_2).$$
\end{proof}

The following propositions are key ingredients for proving our main theorem.
\begin{prop}[{\cite[Corollary 2.9]{Kato12}, \cite[Theorem 4.6]{McNa12}}]%
\label{pro:finite global dimension}
If the quiver associated with $R^I(n)$ is of type $A, D, E$, then $R^I(n)$ has finite global dimension.
\end{prop}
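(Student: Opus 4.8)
The plan is to reduce to single blocks and then to equip each block with an ``affine highest weight'' (equivalently, affine quasi-hereditary) structure, from which finite global dimension follows by a general homological argument. First, since the idempotents $e(\beta)$ are central, $R^I(n)\simeq\prod_{\haut(\beta)=n}R(\beta)$ and hence $\on{gl.dim}R^I(n)=\max_{\haut(\beta)=n}\on{gl.dim}R(\beta)$; so it suffices to bound $\on{gl.dim}R(\beta)$ for each $\beta$ with $\haut(\beta)=n$. Note that $R(\beta)$ is itself infinite-dimensional over $\cor$ — it contains the polynomial subalgebras $\cor[x_1,\dots,x_n]e(\nu)$ — so the point is not finite-dimensionality but a triangular structure assembled from cuspidal modules; this is where the hypothesis that $\Gamma_I$ be of type $A,D,E$, i.e.\ that the associated root system $\Phi$ be finite with all roots real, enters.

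To build the structure, fix a convex total order $\preceq$ on the finite set $\Phi^+$ of positive roots (so $\alpha\prec\alpha+\gamma\prec\gamma$ whenever $\alpha\prec\gamma$ and $\alpha+\gamma\in\Phi^+$). For each $\alpha\in\Phi^+$ one constructs: (a) the \emph{cuspidal} graded-simple $R(\alpha)$-module $L_\alpha$, characterized by a standard vanishing condition on its parabolic restrictions $\Res_{\gamma,\alpha-\gamma}L_\alpha$ relative to $\preceq$; and (b) the \emph{cuspidal standard} module $\Delta_\alpha$, which is finitely generated and free over a polynomial subalgebra $\cor[z_\alpha]$ of its endomorphism ring and satisfies $\Delta_\alpha/z_\alpha\Delta_\alpha\simeq L_\alpha$ (for $\alpha=\alpha_i$ a simple root, $R(\alpha_i)=\cor[x_1]=\Delta_{\alpha_i}$ and $L_{\alpha_i}=\cor$). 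Then, for a \emph{Kostant partition} $\pi=(\alpha_1\succeq\alpha_2\succeq\cdots\succeq\alpha_\ell)$ of $\beta$ — a multiset of positive roots, written in weakly decreasing order and summing to $\beta$ — set the standard module
\[
\Delta(\pi)\seteq q^{s(\pi)}\bigl(\Delta_{\alpha_1}\circ\Delta_{\alpha_2}\circ\cdots\circ\Delta_{\alpha_\ell}\bigr)
\]
for a suitable grading shift $s(\pi)$; it is free of finite rank over the polynomial ring $B(\pi)=\cor[z_1,\dots,z_\ell]$, and its reduction $\bar\Delta(\pi)=\Delta(\pi)/(z_1,\dots,z_\ell)\Delta(\pi)$ is finite-dimensional.

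The affine highest weight package then consists of the following, proved by simultaneous induction along $\preceq$ from the adjunction between $\circ$ and $\Res$ and the Mackey (shuffle) filtration of $\Res\circ\Ind$: (i) $\bar\Delta(\pi)$ has a simple head $L(\pi)$, the set $\{L(\pi)\}$ (as $\pi$ runs over Kostant partitions of $\beta$) is a complete irredundant list of graded simple $R(\beta)$-modules up to shift, and $[\bar\Delta(\pi)]=[L(\pi)]+\sum_{\sigma<\pi}(\text{shifts})\,[L(\sigma)]$ in $K(R(\beta)\gmod)$ for a suitable partial order $\le$ on Kostant partitions; (ii) $\End_{R(\beta)}(\Delta(\pi))\simeq B(\pi)$, a polynomial ring, which has finite global dimension; (iii) $\Ext^k_{R(\beta)}(\Delta(\pi),\Delta(\sigma))=0$ for $k\ge1$ unless $\pi<\sigma$; (iv) every finitely generated projective $R(\beta)$-module has a finite filtration whose subquotients are direct sums of grading-shifts of standard modules $\Delta(\pi)$, with the corresponding unitriangularity. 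Statements (i)--(iv) constitute the defining data of an affine highest weight structure on $R(\beta)\gmod$ with poset the Kostant partitions of $\beta$ (equivalently, $R(\beta)$ is affine quasi-hereditary); one then invokes the general homological consequence — proved by induction on the poset, the base case amounting to the finite global dimension of the polynomial endomorphism rings $B(\pi)$ — that such an algebra has finite global dimension, with an explicit bound in terms of the number and heights of the Kostant partitions of $\beta$. Since finite graded global dimension implies finite global dimension, this finishes the proof. (This is, in outline, the route of \cite{McNa12}.)

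The main obstacle is this cuspidal-module apparatus itself — constructing $L_\alpha$ and $\Delta_\alpha$ and verifying (i)--(iv) — and within it the delicate points are the precise structure of $\Delta_\alpha$ (finite freeness over $\cor[z_\alpha]$, which repeatedly uses that $\alpha$ is a real root) and the unitriangular $\Delta$-filtrations of projectives in (iv); both rest on a careful analysis of induction and restriction together with the combinatorics of the convex order on $\Phi^+$. Finiteness of $\Phi^+$ — the type $A,D,E$ hypothesis — is used throughout; it is equivalent, by Gabriel's theorem, to the quiver representation variety $E_\beta=\bigoplus_{h\in\Omega}\Hom\bigl(\cor^{\beta_{s(h)}},\cor^{\beta_{t(h)}}\bigr)$ (writing $\beta=\sum_{i\in I}\beta_i\alpha_i$) having only finitely many orbits under $G_\beta=\prod_{i\in I}\mathrm{GL}_{\beta_i}(\cor)$, these orbits being indexed exactly by the Kostant partitions of $\beta$. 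An alternative, geometric route — that of \cite{Kato12} — realizes $R(\beta)$ as the equivariant $\Ext$-algebra $\Ext^\bullet_{G_\beta}(\mathbb{L},\mathbb{L})$ of Lusztig's sheaf $\mathbb{L}$ on $E_\beta$; finiteness of the $G_\beta$-orbits together with the pointwise purity of the intersection cohomology complexes of the orbit closures — a phenomenon special to Dynkin quivers — then furnishes a geometric ``standardization'' of $\mathbb{L}$, yielding once more finite global dimension.
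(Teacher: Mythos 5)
The paper does not prove this proposition at all: it is quoted verbatim from the cited references, namely \cite[Corollary 2.9]{Kato12} and \cite[Theorem 4.6]{McNa12}. Your outline is an accurate account of exactly those two proofs --- the reduction to the blocks $R(\beta)$ via the central idempotents $e(\beta)$, McNamara's cuspidal/standard-module apparatus organized by a convex order on the finite set of positive roots and the resulting affine quasi-hereditary structure, and Kato's alternative geometric realization via Ext-algebras of Lusztig sheaves on Dynkin quiver varieties --- so it takes the same route the paper relies on, just spelled out rather than cited. The genuinely hard content (construction of the cuspidal modules $L_\alpha$ and $\Delta_\alpha$, finite freeness of $\Delta_\alpha$ over $\cor[z_\alpha]$, and the $\Delta$-filtrations of projectives) is deferred in your sketch, but that is consistent with the paper, which defers all of it to the references; as an outline there is nothing to object to.
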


\begin{prop} \label{pro:projectivity}
Let $A\to B$ be a homomorphism of algebras.
We assume the following conditions:
\bna
\item $B$ is a finitely generated projective $A$-module,
\item $\Hom_A(B,A)$ is a projective $B$-module,
\item the global dimension of $B$ is finite.
\ee
Then we have:
\bni
\item any $B$-module projective over $A$ is projective over $B$,
\item any $B$-module flat over $A$ is flat over $B$.
\ee
\end{prop}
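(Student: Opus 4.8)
The plan is to deduce both assertions from a single short exact sequence attached to $M$, together with a dimension-shifting argument that hypothesis (c) forces to terminate. Write $B^{\vee}=\Hom_A(B,A)$. Since $B$ is a finitely generated projective $A$-module by (a), the canonical map $B^{\vee}\otimes_A N\to\Hom_A(B,N)$ is an isomorphism for every $A$-module $N$, the module $B^{\vee}$ is again finitely generated projective over $A$, and by (b) it is projective over $B$. For a $B$-module $M$, restrict scalars to $A$ and consider the exact sequence of $B$-modules
\[
0\to M\xrightarrow{\ \eta_M\ }\Hom_A(B,M)\to Q\to 0,
\]
where $\eta_M(m)$ is the map $b\mapsto bm$, the $B$-action on $\Hom_A(B,M)$ is $(b\cdot f)(x)=f(xb)$, and $Q$ is the cokernel. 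Here $\eta_M$ is a $B$-linear monomorphism, and evaluation at $1\in B$ provides an $A$-linear retraction of $\eta_M$; hence the sequence is split as a sequence of $A$-modules, and in particular the restriction of $Q$ to $A$ is an $A$-module direct summand of $\Hom_A(B,M)\cong B^{\vee}\otimes_A M$.

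For (i): assume $M$ is projective over $A$. Writing $M$ as a direct summand of a free $A$-module $A^{(J)}$ shows that $B^{\vee}\otimes_A M$ is a direct summand of $(B^{\vee})^{(J)}$, hence projective over $B$ by (b); it is also projective over $A$, being a tensor product of two projective $A$-modules. Thus in the sequence above the middle term is projective over $B$ and $Q$ is once more a $B$-module projective over $A$. The long exact sequences of $\Ext_B(-,N)$, for $N$ running over all $B$-modules, then give $\operatorname{pd}_B Q=\operatorname{pd}_B M+1$ whenever $\operatorname{pd}_B M\ge 1$. Iterating the construction produces $B$-modules $M=M_0,M_1,M_2,\dots$, each projective over $A$, with $M_{j+1}$ the cokernel associated with $M_j$; if $\operatorname{pd}_B M\ge 1$, then $\operatorname{pd}_B M_j=\operatorname{pd}_B M+j$ for all $j$, which eventually exceeds the global dimension of $B$, finite by (c) --- a contradiction. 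Hence $\operatorname{pd}_B M=0$, i.e.\ $M$ is projective over $B$.

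For (ii): run the identical argument with ``projective'' replaced by ``flat'', $\Ext$ by $\Tor$, and projective dimension by flat dimension. One uses that $B^{\vee}\otimes_A F$ is flat over $B$ when $F$ is flat over $A$ --- by Lazard's theorem $F$ is a filtered colimit of finite free $A$-modules, so $B^{\vee}\otimes_A F$ is a filtered colimit of finite direct sums of the projective, hence flat, $B$-module $B^{\vee}$ --- that $B^{\vee}\otimes_A F$ is flat over $A$, and that the flat dimension of any $B$-module is bounded above by the global dimension of $B$. The long exact $\Tor_B$-sequences give $\operatorname{fd}_B Q=\operatorname{fd}_B M+1$ once $\operatorname{fd}_B M\ge 1$, and the same boundedness argument forces $\operatorname{fd}_B M=0$.

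The only genuine work is the bookkeeping around the middle term: one must remember that the $B$-module structure on $\Hom_A(B,M)$ used here comes from the regular right action on $B$, not from the $B$-action on $M$; verify that the canonical sequence is split over $A$ so that $Q$ inherits projectivity (resp.\ flatness) over $A$; and confirm that the $\Ext$/$\Tor$ long exact sequences produce exactly the ``$+1$'' shift of homological dimension. Granting these, hypothesis (c) finishes the argument in one line, and this is precisely where finiteness of the global dimension is indispensable: without it the dimensions $\operatorname{pd}_B M_j$ would merely grow without bound and yield no contradiction.
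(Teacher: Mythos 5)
Your proof is correct and takes essentially the same route as the paper's: the same canonical exact sequence $0 \to M \xrightarrow{\;\eta_M\;} \Hom_A(B,M) \to Q \to 0$, the same identification $\Hom_A(B,M)\simeq \Hom_A(B,A)\otimes_A M$ via (a), and the same use of (b) to make the middle term projective (resp.\ flat) over $B$ and of (c) to terminate the dimension shift. The only difference is organizational --- the paper runs a descending induction on the statement ``flat dimension over $A$ at most $d$ implies flat dimension over $B$ at most $d$,'' anchored at $d\gg 0$ by (c), while you iterate the cokernel construction upward until the projective (resp.\ flat) dimension would exceed the global dimension of $B$ --- and these are two phrasings of the same argument.
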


\begin{proof}
Since the proof is similar, we give only the proof of (ii).

Let us denote by $\fd M$ the flat dimension of an $A$-module $M$.
By (a) we have $$\fd(M)\le\fd[B](M)$$ for any $B$-module $M$.

By (b),  $\Hom_A(B,A)\tens_AL$ is a flat $B$-module
if $L$ is a flat $A$-module. Indeed, 
the functor $X\tens_B\Hom_A(B,A)$ is exact in $X\in\Mod(A^\opp)$
and hence $X\tens_B\Hom_A(B,A)\tens_AL$
is also exact in $X$.

On the other hand, for any $A$-module $L$, the canonical $B$-module homomorphism
$$\Hom_A(B,A)\tens_AL\to \Hom_A(B,L),
\qquad f\tens s\longmapsto (B\ni b\mapsto f(b)s)
$$
is an isomorphism by (a). 
Hence we conclude that
$\Hom_A(B,L)$ is a flat $B$-module
for any flat $A$-module $L$.
It immediately implies that
$$\fd[B]\bl\Hom_A(B,L)\br\le\fd (L)\quad\text{for any $A$-module $L$.}$$

Now, let $M$ be a $B$-module.
Then there exists a canonical $B$-module homomorphism
$$\vphi_M\colon M\to \Hom_A(B,M)$$
given by $\vphi_M(x)(b)=bx$.
It is evidently injective.

In order to prove the proposition,
it is enough to show the following statement for any $d\ge0$:
\eq \text{for any $B$-module $M$,
$\fd (M)\le d$ implies $\fd[B](M)\le d$.} \nonumber
\eneq
We shall show it by the descending induction on $d$.
If $d\gg0$, it is a consequence of (c).
Let $M$ be a $B$-module with $\fd(M)\le d$.
We have an exact sequence
$$0\to M\To[\vphi_M] \Hom_A(B,M)\to N\to 0.$$
Then
$\fd\bl\Hom_A(B,M)\br\le\fd[B]\bl\Hom_A(B,M)\br\le \fd (M)\le d$.
Hence we have $\fd[A]N\le d+1$,
which implies that $\fd[B](N)\le d+1$
by the induction hypothesis.
Finally we conclude that
$\fd[B](M)\le d$.
Thus the induction proceeds.
\end{proof}

\begin{theorem} \label{thm:exact}
If the quiver associated with $R^I(n)$ is of type $A, D, E$, then the functor $\F_n$ is exact.
\end{theorem}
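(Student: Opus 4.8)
The functor $\F_n=\Vhat^n\otimes_{R^I(n)}(-)$ is right exact, so it is enough to show that it is left exact, i.e.\ that $\Vhat^n$ is flat as a right $R^I(n)$-module. The plan is to obtain this from Proposition~\ref{pro:projectivity} applied to the inclusion of the polynomial subalgebra $A=\bP\hookrightarrow B=R^I(n)$: one first checks that $\Vhat^n$ is flat over $\bP$, and then that hypotheses (a)--(c) of that proposition hold, so that flatness over $\bP$ is promoted to flatness over $R^I(n)$.

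First I would check that $\Vhat^n$ is flat over $\bP$, which is elementary. Since each $V_{S(\nu_k)}$ is finite-dimensional over $\cor$, the module $V_\nu$ is free of finite rank over $\cor[X_1^{\pm1},\ldots,X_n^{\pm1}]$, hence $\Ohat_{\mathbb{T}^n,X(\nu)}\otimes_{\cor[X_1^{\pm1},\ldots,X_n^{\pm1}]}V_\nu$ is free of finite rank over $\Ohat_{\mathbb{T}^n,X(\nu)}$; thus $\Vhat^n$ is free over $\bPh$. On the other hand, block by block the homomorphism $\bP\to\bPh$, $x_ke(\nu)\mapsto(X(\nu_k)^{-1}X_k-1)e(\nu)$, is the composite of the localization of $\cor[x_1,\ldots,x_n]$ at the maximal ideal $(x_1,\ldots,x_n)$ with $\m$-adic completion, and is therefore flat. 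As $\bPh$ is flat over $\bP$ and $\Vhat^n$ is free over $\bPh$, it follows that $\Vhat^n$ is flat over $\bP$; since $\bP$ is commutative this holds on either side.

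Next I would verify the three hypotheses of Proposition~\ref{pro:projectivity} for $\bP\hookrightarrow R^I(n)$. Hypothesis (a) holds because, by the basis theorem for \KLRs, $R^I(n)$ is free of rank $n!$ as a left (and as a right) $\bP$-module, hence finitely generated projective over $\bP$. Hypothesis (c) holds by Proposition~\ref{pro:finite global dimension}, since the quiver of $R^I(n)$ is of type $A$, $D$, $E$. Hypothesis (b) is the crucial one: one needs $\Hom_{\bP}(R^I(n),\bP)$ to be a projective $R^I(n)$-module. This follows from the fact that $R^I(n)$ is a Frobenius extension of $\bP$ --- so that $\Hom_{\bP}(R^I(n),\bP)\cong R^I(n)$ as $R^I(n)$-modules, up to a twist by an automorphism fixing $\bP$ which is immaterial for projectivity --- and this in turn can be read off from the basis theorem by exhibiting a nondegenerate $\bP$-bilinear trace on $R^I(n)$ (given, as for the nil-Hecke algebra, by the coefficient of the top-length term $\tau_{w_0}$, where $w_0$ is the longest element of $S_n$, in the expansion with respect to the standard $\bP$-basis indexed by $S_n$; the $Q_{ij}$-corrections are of lower order and do not affect nondegeneracy).

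With (a)--(c) established, Proposition~\ref{pro:projectivity}(ii) yields that any $R^I(n)$-module flat over $\bP$ is flat over $R^I(n)$. Applying this to $\Vhat^n$ --- after transporting its right $R^I(n)$-module structure to a left one via the anti-automorphism of $R^I(n)$ that fixes $e(\nu)$, $x_k$, $\tau_m$ and restricts to the identity on the commutative algebra $\bP$ --- shows that $\Vhat^n$ is flat over $R^I(n)$, and hence that $\F_n$ is exact. The main obstacle is hypothesis (b), i.e.\ the verification that $R^I(n)$ is a Frobenius extension of its polynomial subalgebra $\bP$; the remaining ingredients are either routine (the flatness of $\Vhat^n$ over $\bP$) or already available (Propositions~\ref{pro:finite global dimension} and~\ref{pro:projectivity}).
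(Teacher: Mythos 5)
Your proof is correct and follows exactly the same route as the paper: apply Proposition~\ref{pro:projectivity} with $A=\bP$ and $B=R^I(n)$, check that $\Vhat^n$ is flat over $\bP$, and conclude that it is flat over $R^I(n)$. The only difference is that you spell out the verifications of hypotheses (a) and (b) (the PBW-type basis over $\bP$ and the twisted Frobenius/perfect-pairing argument for $\Hom_{\bP}(R^I(n),\bP)$), together with the left/right transport via the standard anti-automorphism, all of which the paper dismisses as ``well-known.''
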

\begin{proof}
Let us apply Proposition \ref{pro:projectivity} with $A=\mathbb{P}_n$
and $B= R^I(n)$.
The conditions (a) and (b) are well-known, and (c) is nothing but Proposition \ref{pro:finite global dimension}. Therefore, since $\Vhat^n$ is a flat $\mathbb{P}_n$-module,
it is a flat $R^I(n)$-module.
\end{proof}

%%%%%%%%%%%%%%%%%%%%%%%%%%%%%%%%%%%%%%%%%%%%%%%%%%%%%%%%%%%%%%%%%%%%%%%%%%%%%%%%%%%
\bigskip

\begin{thebibliography}{99}
%

\bibitem{AK} T. Akasaka and M. Kashiwara,
{\it Finite-dimensional representations of quantum affine algebras},
Publ. RIMS. Kyoto Univ., {\bf33} (1997), 839-867.


\bibitem{McNa12} P. J. McNamara,
{\it Finite dimensional representations of Khovanov-Lauda-Rouquier algebras I: Finite Type},
arXiv:1207.5860v2.



\bibitem{CP94} V. Chari and A. Pressely,
{\it A guide to Quantum Groups}, Cambridge U. Press, Cambridge, 1994.

\bibitem{CP96} V. Chari and A. Pressely,
{\it Quantum affine algebras and affine Hecke algebras},
Pacific J. Math. {\bf 174} (2) (1996), 295-326.

\bibitem{Che}  I. V. Cherednik,
{\it A new interpretation of Gelfand-Tzetlin bases},
Duke Math. J., {\bf54} (1987), 563-577.

\bibitem{DO94}
E. Date and M. Okado,
{\it Calculation of excitation spectra of the spin model related with the vector representation of the quantized affine algebra of type $A^{(1)}_n$},
Internat. J. Modern Phys. A {\bf 9} (3) (1994), 399-417.

\bibitem{Dr85}
V. G. Drinfeld,
{\it Hopf algebras and the quantum Yang-Baxter equation},
Soviet Math. Dokl. {\bf32},
(1985), 254-258.


\bibitem{GRV94}V. Ginzburg , N. Reshetikhin, E. Vasserot,
{\it Quantum groups and flag varieties},
A.M.S. Contemp. Math. {\bf 175} (1994), 101-130.

\bibitem{HK02}
J.~Hong, S.-J. Kang, \emph{Introduction to Quantum Groups and
Crystal Bases}, Grad. Stud. Math. \textbf{42}, Amer. Math. Soc.,
2002.

\bibitem{HL11}
D. Hernadez, B. Leclerc,
\emph{Quantum Grothendieck rings and derived Hall algebras},
arXiv:1109.0862v2.



\bibitem{Kac}
V. Kac,
{\it Infinite Dimensional Lie algebras},
3rd ed., Cambridge University Press, Cambridge, 1990.


\bibitem{Kas91}
M. Kashiwara,
\newblock {\it On crystal bases of the $q$-analogue of
universal enveloping algebras},
\newblock Duke Math. J. {\bf 63} (1991), 465--516.



\bibitem{Kas94}
\bysame, \emph{Crystal bases of modified quantized enveloping
algebra}, Duke Math. J. {\bf 73} (1994), 383--413.

\bibitem{Kas02}
\bysame,
{\it On level zero representations of quantum affine algebras},
Duke. Math. J. {\bf112} (2002), 117--175.



\bibitem{Kato12} S. Kato,
{\it PBW bases and KLR algebras},
arXiv:1203.5254v3.


\bibitem{KL09}
M.~Khovanov, A. Lauda, \emph{A diagrammatic approach to
categorification of quantum groups
  {I}}, Represent. Theory \textbf{13} (2009), 309--347.

\bibitem{KL11}
\bysame, \emph{A diagrammatic approach to categorification of
  quantum groups {II}}, Trans. Amer. Math. Soc. \textbf{363} (2011),
  2685--2700.



\bibitem{Kim12} M. Kim,
{\it Khovanov-Lauda-Rouquier algebras and $R$-matrices}, Ph. D.
thesis, Seoul National University, 2012.


\bibitem{R08}
R.~Rouquier, \emph{2-{K}ac-{M}oody algebras}, 	arXiv:0812.5023v1.

\bibitem{R11}
R.~Rouquier, {\it Quiver Hecke algebras and 2-Lie algebras}, 
arXiv:1112.3619v1.


\bibitem{VV09}
M.~Varagnolo, E.~Vasserot, \emph{Canonical bases and KLR
  algebras}, J. reine angew. Math. \textbf{659} (2011), 67--100.

\end{thebibliography}
\end{document}